\theoremstyle{plain}
\newtheorem{maintheorem}{Theorem}
\newcommand{\vertiii}[1]{{\left\vert\kern-0.25ex\left\vert\kern-0.25ex\left\vert #1 
		\right\vert\kern-0.25ex\right\vert\kern-0.25ex\right\vert}}
\newtheorem{maincorollary}[maintheorem]{Corollary}
\newtheorem*{theorem*}{Theorem}
\newtheorem{theorem}{Theorem}[section]
\newtheorem{corollary}[theorem]{Corollary}
\newtheorem{lemma}[theorem]{Lemma}
\newtheorem{remark}[theorem]{Remark}
\newtheorem*{remark*}{Remark}
\newtheorem{proposition}[theorem]{Proposition}
\newcommand{\N}{\ensuremath{\mathbb{N}}}
\newcommand{\R}{\ensuremath{\mathbb{R}}}
\newcommand{\Z}{\ensuremath{\mathbb{Z}}}
\newcommand{\cA}{{\mathcal A}}
\newcommand{\cB}{{\mathcal B}}
\newcommand{\cH}{{\mathcal H}}
\newcommand{\cL}{{\mathcal L}}
\newcommand{\vep}{\varepsilon}
\newcommand{\diam}{\operatorname{diam}}
\newcommand{\field}[1]{\mathbb{#1}}
\newcommand{\real}{\field{R}}
\newcommand{\al} {\alpha}
\renewcommand{\field}[1]{\mathbb{#1}}
\newcommand{\vr}{\varphi}
\begin{document}
\title[Regularity, linear response formula and differentiability...]{Regularity, linear response formula and differentiability of the free energy for non-uniformly expanding local homeomorphisms}
	
\date{}
\author{Carlos Bocker}
\address[Carlos Bocker]{Department of Mathematics, UFPB\\ Jo\~ao Pessoa-PB, Brazil}
\email{cbocker@gmail.com}

\author{Ricardo Bortolotti}
\address[Ricardo Bortolotti]{Department of Mathematics, UFPE\\ Recife-PE, Brazil}
\email{ricardo.bortolotti@ufpe.br}

\author{Armando Castro}
\address[Armando Castro]{Department of Mathematics, UFBA\\ Salvador-BA, Brazil}
\email{armandomath@gmail.com}

\author{Sávio Santana}
\address[Sávio Santana]{Department of Mathematics, UFBA\\ Salvador-BA, Brazil}
\email{santana.savio@ufba.br}

\begin{abstract}
We study equilibrium states for an open class of non-uniformly expanding local homeomorphisms defined by a 
mild condition such that for some iterate each point admits at least one contracting inverse branch. 
We prove the existence and uniqueness of equilibrium states and the differentiability of statistical quantities (such as the equilibrium states and the free energy function) with respect to the dynamical system.
\end{abstract}

\maketitle



\section{Introduction}

Let $M$ be a compact metric space, given a continuous map $f:M \to M$ and a
continuous potential $\phi:M \to \R$, the variational principle asserts
that
$$\displaystyle P_{top}(f, \phi) = \sup \Big\{ h_\mu(f) +  \int \phi d\mu \Big\},$$
where $P_{top}(f,\phi)$ denotes the topological pressure of $f$ with respect to $\phi$, $h_\mu(f)$ denotes the metric entropy, and the supremum  is taken over all the $f$-invariant probability measures. An \textit{equilibrium state} for $f$ with respect to $\phi$ is an
invariant measure that attains the supremum in the right hand side above.

Chaotic dynamical systems usually admit infinitely many invariant measures and the equilibrium states are among the most important invariant measures for such systems. Their concept was brought from statistical mechanics to hyperbolic dynamical systems, due to the existence of Markov partitions the notions of Gibbs measures and equilibrium states were translated to ergodic theory.

For transitive uniformly expanding and uniformly hyperbolic maps, equilibrium states always exist and they are unique when the potential is Hölder continuous (\cite{Ru89, Bow75}).

Also inspired from physics, one can ask whether the equilibrium state is differentiable with respect to the dynamical system, this property has been refereed as linear response formula. For uniformly expanding maps, when the potential is $C^r$-smooth, the thermodynamical quantities are also differentiable and varies differentiably  with respect to the dynamics (\cite{Ru89, GP17, BC19}). Linear response formulas have been obtained for more general dynamical systems, such as one-dimensional piecewise expanding and quadratic maps (\cite{BS08, BS09, BS12, Ru05}) and intermittent maps (\cite{BSa16, BTo16, K16, Sedro}).

Many important classes of dynamical systems are not uniformly expanding but present some kind of expanding behaviour, such as the Manneville-Pomeau map. Although they are not uniformly expanding, these class of maps present some non-uniform expansion.
Several developments were made to study non-uniformly expanding dynamical systems  
(\cite{OV, VV10, BCV16}). The techniques used in these works were based in a spectral analysis of the transfer operator and allowed to obtain similar results of existence and uniqueness of equilibrium states and their differentiable dependence (linear response formulas).

The open class of non-uniformly expanding maps usually considered in \cite{OV, VV10, CV2013, BCV16} is formulated considering a ``bad set" of points where the dynamics does not expands and supposing that this set is covered by few domains of injectivity (less than the degree).

In the cases above it was noticed that the potential is hyperbolic \cite{VV10}. In recent works \cite{RV, ARS2021}, it was considered hyperbolic potentials instead of asking directly that the dynamic is non-uniformly expanding. In \cite{ARS2021} they notice that if the map is also strong topologically mixing then every point admits some inverse branch that is a contraction.

In this work we formulate a simpler condition that defines a class of non-uniformly expanding maps without mentioning the existence of ``bad sets", we just ask that every point admits an inverse branch that is a contraction (Condition (C) in Section 2).
We do not ask that the map is transitive, actually, this class of maps includes systems where the equilibrium state is supported in a proper repeller.

We prove the existence and uniqueness of equilibrium states for the class of dynamical systems defined by this simpler condition. We also prove statistical properties such as exponential decay of correlations and  Central Limit Theorem for these equilibrium states, and  the differentiable dependence of certain statistical quantities (such as the free energy function and the rate functions for a large deviation principle) with respect to the dynamical systems.

This paper is organized as follows. In Section 2, we describe the class of dynamical systems that we consider and state the main results. In Section 3 we prove the main proposition of this work, that connects the simpler condition of this work with previous known conditions of non-uniform expansion involving "good" and "bad" regions.
In Section 4 we give the proof of the theorems. Finally, in Section 5 we discuss examples of dynamical systems satisfying this condition.

\section{Statements of the results}

Let $M$ be a compact and connected Riemannian manifold with distance $d$.
Let $f:M \to M$ be a \textit{local homeomorphism} and assume that there exists a continuous function
$x\mapsto L(x)$ such that for every $x\in M$ there is a
neighborhood $U_x$ of $x$ so that $f_x : U_x \to f(U_x)$, defined by $f_x(y)=f(y)$, is
invertible and
\begin{equation}
d(f_x^{-1}(y),f_x^{-1}(z))
\leq L(x) \;d(y,z), \quad \forall y,z\in f(U_x).
\end{equation}

Let $L_f=\sup_{w\in M} L(w)$, we know that there exists $\delta_0>0$ depending only on $L_f$ and $\deg(f)$ such that for all $x$, $f^{-1}(B(x,\delta_0))$ is a disjoint union of open sets $U_1, \cdots, U_{\deg(f)}$ such that $f_i=f_{\restriction U_i}$ is a homeomorphism and
\begin{equation}\label{L}
d(f_i^{-1}(y),f_i^{-1}(z))
\leq L_f \;d(y,z), \quad \forall y,z\in B(x,\delta_0).
\end{equation} 

Writing $L_k(x) =  L(x) L(f(x)) \cdots   L( f^{k-1}(x))$ and $\delta_k = \delta_0 L_f^{-k}$, we have that the iterates $f^k$ are also local homeomorphisms whose inverse branches satisfy \eqref{L} with $L_{f^k} \leq  \sup_{w\in M}L_k(w)$ and $\delta_k$ instead of $L_f$ and $\delta_0$, respectively.

Given numbers $0<\gamma<1$, $L \geq 1$ and integers $N, G \geq 2$,  denote by $\cH_{\gamma, L, N, G}$ the set of local homeomorphisms $f:M\to M$ such that  $\deg(f)\leq G$, $L_{f^N}\leq L$ and satisfy the following condition:


\begin{itemize}\label{properties of f}
	\item[\textbf{(C)}] For each $x\in M$ there is $y\in f^{-N}(x)$ such that $L_N(y)  < \gamma$.\label{propety b}
\end{itemize}

\vspace{0.3cm}

Condition (C) means that for some iterate $N$ every point admits at least one inverse branch that is a contraction.

Denote by $\cH^\alpha_{\gamma, L, N, G}$, $0<\alpha<1$,  the set of maps in $\cH_{\gamma, L, N, G}$ that are  $\alpha$-H\"older continuous, by $\cH^\alpha_{\gamma, L, N, G,\delta_0}$ the set of maps that also satisfy \eqref{L} for the same constant $\delta_0$. And for $r\geq 1,\,r\in\Z$, denote by $\cH^r_{\gamma, L, N, G}$ the set of maps in $\cH^\alpha_{\gamma, L, N, G}$  that are $C^r$ maps.

The \textit{Ruelle-Perron-Fr\"obenius transfer operator} $\cL_{f,\phi}$ associated to 
$f:M\to M$ and $\phi:M\to\real$ is the linear operator defined on a Banach space $X \subset C^0(M,\mathbb R)$ 
of continuous functions $\vr:M\to\real$ defined by
\begin{equation}
\cL_{(f,\phi)} \vr(x) = \sum_{f(y)=x} e^{\phi(y)}\vr(y).
\end{equation}

Since $f$ is a local homeomorphism it is clear that $\cL_{(f,\phi)} \vr$ is continuous for every continuous $\vr$,
 when $f \in C^\alpha(M,M)$ the operator $\cL_{(f,\phi)}$ preserves the Banach space $C^{\alpha}(M,\mathbb R)$, $0<\alpha<1$, of H\"older
 continuous observables. And when $f$ is a $C^r$-local diffeomorphism, $\cL_{(f,\phi)}$ preserves the Banach  space $C^{r}(M,\mathbb R)\subset C^0(M,\mathbb R)$, $r\ge 1$.

We say that the Ruelle-Perron-Frobenius operator $\cL_{(f,\phi)}$ acting on a Banach space $X$ has the
\textit{spectral gap property} if there exists a decomposition of its spectrum 
$\sigma(\cL_{(f,\phi)})\subset \mathbb C$ as follows: $\sigma(\cL_{(f,\phi)})=\{\lambda_1\}\cup \Sigma_1$ where
$\lambda_1$ is a leading eigenvalue for $\cL_{(f,\phi)}$ with one-dimensional associated eigenspace
and $\Sigma_1 \subsetneq \{ z\in \mathbb C : |z|<\lambda \}$ for some $0< \lambda < \lambda_1$.

An $f$-invariant probability $\mu$ has \textit{exponential decay of
	correlations} for H\"older continuous observables if
there exists some constants $0<\tau<1$ such that
for all $\varphi\in L^1(\mu), \psi\in C^{\alpha}(M)$ there exists $K(\varphi,\psi)>0$ satisfying
\begin{equation*}
	\left|\int_M (\varphi\circ f^n) \psi d\mu - \int_M \varphi d\mu\int_M \psi d\mu\right|
	\leq K(\varphi,\psi)\cdot\tau^n,
	\quad \text{for every $n\ge 1$}.
\end{equation*}

We say that $\mu$ is \textit{exact} if all observable $\varphi$ which is measurable with respect to the $\sigma$-algebra $\cA=\cap_{n=1}^{\infty} f^{-n}(\cB)$ is constant ($\cB$ is the Borelian $\sigma$-algebra). Equivalently $\mu$ is exact if $\varphi=\varphi_n\circ f^n$ for some $\varphi_n$ in the set of observables and all $n\in\N$ implies that $\varphi$ is constant.

\subsection{\bf H\"older continuous setting}

An observable $g: M\to \mathbb R$ is $\al$-H\"older continuous if the H\"older constant  
$$
|g|_\al
=\sup_{x\neq y} \frac{|g(x)-g(y)|}{d(x,y)^\al}
$$
is finite. As usual, we endow the space $C^\al(M, \mathbb R)$ of H\"older continuous observables with
the norm $\|\cdot\|_\al=\|\cdot \|_0+|\cdot|_\al$.
Consider the set $P^{\alpha}(\vep)$ of $\alpha$-H\"older
continuous   potentials $\phi:M \to \mathbb R$ such that 
\begin{equation}\label{potential}
	\sup\phi-\inf\phi<\vep \quad \text{ and } \quad |e^{\phi}|_{\alpha}  <\vep \;e^{\inf \phi}
\end{equation}
where $\vep$ is chosen small enough (satisfying equation \eqref{eq. castro varandas 1} in Section 4).

\begin{maintheorem}\label{thm.spectralgap}(Equilibrium states for H\"older potentials)
	Given $\gamma\in(0,1)$ and integers $N, G \geq 2$, there are  $L>1$ and $\vep>0$ such that for all $f\in \cH^\alpha_{\gamma, L, N, G}$ and for all potential $\phi\in P^\alpha(\vep)$ the transfer operator $\cL_{f,\phi}$ has the spectral gap property in the space of $C^\alpha$ observables.
	Denoting by $\lambda_{f,\phi}=\rho(\mathcal{L}_{f,\phi})$ the spectral radius of $\mathcal{L}_{f,\phi}$, the following properties hold:
	\begin{enumerate}
		\item There exists a unique probability measure $\nu_{f,\phi}$, called reference measure, such that 
		$\mathcal{L}_{f,\phi}^*\nu_{f,\phi}=\lambda_{f,\phi} \nu_{f,\phi};$
		\item There is a unique positive function $h_{f,\phi}: M \to \R$ of class $C^\alpha$ bounded from zero and infinity such that $\mathcal{L}_{f,\phi}h_{f,\phi}=\lambda_{f,\phi} h_{f,\phi}$
		with $\int h_{f,\phi}\, d\nu_{f,\phi}=1$.
		\item The $f$-invariant measure $\mu_{f,\phi}=h_{f,\phi}\nu_{f,\phi}$ is the unique equilibrium state of $(f,\phi)$, it has exponential decay of correlation for $C^\alpha$ observables and is exact.
		\item The topological pressure satisfies $P_{top}(f,\phi) = \log \lambda_{(f,\phi)}$.
	\end{enumerate}
	Moreover, the following maps are continuous:
	\begin{enumerate}[label=(\alph*)]
		\item $(f,\phi)\in \cH^\alpha_{\gamma, L, N, G,\delta_0} \times P^\alpha(\vep) \mapsto \lambda_{f,\phi}\in \R$.
		\item $(f,\phi)\in \cH^\alpha_{\gamma, L, N, G,\delta_0} \times P^\alpha(\vep) \mapsto P_{top}(f,\phi)\in \R$.
		\item $(f,\phi) \in \cH^\alpha_{\gamma, L, N, G,\delta_0} \times P^\alpha(\vep) \mapsto h_{f,\phi}\in C^0$;
		\item $(f,\phi)\in \cH^\alpha_{\gamma, L, N, G,\delta_0} \times P^\alpha(\vep) \mapsto \nu_{f,\phi}\in C^{\alpha}(M,\R)^*$.
		\item $(f,\phi)\in \cH^\alpha_{\gamma, L, N, G,\delta_0} \times P^\alpha(\vep) \mapsto \mu_{f,\phi}\in C^{\alpha}(M,\R)^*$.
	\end{enumerate}
And they are analytic with respect to $\phi$.
\end{maintheorem}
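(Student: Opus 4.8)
The plan is to prove Theorem A by establishing the spectral gap in a uniform way over the family $\cH^\alpha_{\gamma,L,N,G}$, following the Ruelle–Perron–Frobenius strategy adapted to the non-uniformly expanding setting as in \cite{CV2013, ARS2021}. The first step is to translate Condition (C) into the more classical ``good/bad region'' language (this is promised as the main proposition of Section 3): one shows that Condition (C) implies the existence of a constant $\sigma<1$ and an open cover such that, for each $x$, among the $\deg(f^N)$ inverse branches of $f^N$ at $x$ at least one contracts by $\sigma$, and—after possibly enlarging $N$—that the ``expansion defect'' accumulated along the bad branches is dominated. Concretely, one wants an inequality of the form $\cL_{f^N,0}\um \le \deg(f^N)$ together with a lower bound $\cL_{f^N,\phi}\um \ge c>0$ coming from the single contracting branch, uniformly in $f$ and $\phi\in P^\alpha(\vep)$; the smallness of $\vep$ in \eqref{potential} is exactly what makes the variation of $e^{\phi^{(N)}}$ along inverse branches controllable.

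**Next I would** run the standard cone argument. Fix $\Lambda = \Lambda(\gamma,N,G)$ and $\kappa\in(0,1)$ and consider the cone $\Lambda_{\kappa,\Lambda} = \{\,\vr\in C^\alpha(M,\R): \vr>0,\ |\vr|_\alpha \le \Lambda\,\inf\vr,\ \text{and a logarithmic-Hölder bound of ratio }\Lambda\,\}$. The key lemma is that some fixed power $\cL^{n_0}_{f,\phi}$ maps the normalized cone strictly inside itself, with the image having uniformly bounded projective (Hilbert) diameter; here $n_0$ is a multiple of $N$ chosen so that iterating Condition (C) forces genuine contraction to win over the $\le G^N$ branches. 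This gives, via Birkhoff's theorem on the Hilbert metric, a uniform contraction rate for the normalized operator, hence a simple leading eigenvalue $\lambda_{f,\phi}$ with positive Hölder eigenfunction $h_{f,\phi}$ bounded away from $0$ and $\infty$, and an eigenmeasure $\nu_{f,\phi}$ for $\cL^*_{f,\phi}$ obtained as a fixed point of $\vr\mapsto \cL^*_{f,\phi}\vr/\cL_{f,\phi}^*\vr(\um)$ on the space of probabilities. Uniqueness of $\nu_{f,\phi}$, positivity and Hölder regularity of $h_{f,\phi}$, and the spectral gap in $C^\alpha$ (splitting $\cL_{f,\phi} = \lambda_{f,\phi}(h_{f,\phi}\otimes\nu_{f,\phi}) + R$ with $\rho(R)<\lambda_{f,\phi}$) all follow by now-classical arguments. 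Items (1)–(4) and exponential decay of correlations and exactness of $\mu_{f,\phi}=h_{f,\phi}\nu_{f,\phi}$ then come from the spectral gap together with the usual identification of the equilibrium state via the variational principle and the entropy formula, as in \cite{OV, VV10}; the Rokhlin/Ledrappier formula gives $h_{\mu}(f)+\int\phi\,d\mu=\log\lambda_{f,\phi}=P_{top}(f,\phi)$, and exactness follows because the only $\cA$-measurable densities fixed by the normalized operator are constant.

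**For the continuity and analyticity statements** the plan is to view $(f,\phi)\mapsto \cL_{f,\phi}$ as a map into bounded operators and apply perturbation theory of the simple isolated eigenvalue $\lambda_{f,\phi}$. The subtlety is that $f\mapsto \cL_{f,\phi}$ is generally \emph{not} continuous in operator norm on $C^\alpha$ (composition with a varying $f$ loses regularity), which is precisely why the statement restricts to $\cH^\alpha_{\gamma,L,N,G,\delta_0}$ and asks only for continuity of $h_{f,\phi}$ in $C^0$ and of $\nu_{f,\phi},\mu_{f,\phi}$ in the \emph{weak} sense $C^\alpha(M,\R)^*$: one shows $\cL_{f,\phi}\to\cL_{f_0,\phi_0}$ in the strong operator topology from $C^\alpha$ to $C^0$ together with a uniform Lasota–Yorke / cone-invariance bound, and then a standard argument (e.g. Keller–Liverani perturbation theory, or the quantitative cone estimates above applied along the perturbation) yields continuity of $\lambda_{f,\phi}$, of $P_{top}$, of the eigenfunction in $C^0$, and of the eigenmeasures weakly. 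Analyticity in $\phi$ is easier: for fixed $f$ the map $\phi\mapsto\cL_{f,\phi}$ is analytic from $C^\alpha$ to $L(C^\alpha)$ (the operator depends on $\phi$ only through the analytic weight $e^{\phi}$), the leading eigenvalue stays simple and isolated on the whole domain $P^\alpha(\vep)$, so analytic perturbation theory of isolated eigenvalues (Kato) gives analyticity of $\lambda_{f,\phi}$, $h_{f,\phi}$, $\nu_{f,\phi}$ and hence of $P_{top}(f,\phi)=\log\lambda_{f,\phi}$ and $\mu_{f,\phi}$.

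**The main obstacle** I expect is the uniformity of the cone contraction over the whole family $\cH^\alpha_{\gamma,L,N,G}$ simultaneously with the potential varying over $P^\alpha(\vep)$: one must choose the cone aperture $\Lambda$, the number of iterates $n_0$, and the threshold $\vep$ in a mutually consistent way so that the contracting inverse branch from Condition (C) dominates the at most $G^{n_0}$ competing branches \emph{and} the resulting Hilbert diameter bound is independent of $f$ and $\phi$—this is where the precise inequality \eqref{eq. castro varandas 1} fixing $\vep$ enters, and it is the technical heart of the argument; the secondary obstacle is organizing the non–operator-norm-continuous perturbation in $f$ carefully enough to extract the (weak, resp.\ $C^0$) continuity claims without overclaiming regularity.
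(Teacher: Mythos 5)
Your proposal has the same skeleton as the paper's proof --- first convert Condition (C) into the classical ``good/bad region'' framework (this is exactly Proposition \ref{key proposition}), then feed the resulting structure, together with the smallness of the potential, into the Ruelle--Perron--Frobenius/spectral-gap machinery --- but the execution of the second stage is genuinely different. The paper does not rerun the cone argument or the perturbation theory at all: it passes to the iterate $g=f^{m}$ and the induced potential $\tilde\phi=\sum_{j=0}^{m-1}\phi\circ f^{j}$, where $m$ is furnished by Proposition \ref{key proposition}, verifies the smallness condition \eqref{eq. castro varandas 1}, and then cites \cite[Theorem A]{CV2013} for the spectral gap, uniqueness of $\nu$, $h$ and of the equilibrium state, decay of correlations and exactness of $(g,\tilde\phi)$, and \cite[Theorem A]{chaos} for the continuity in $(f,\phi)$ (on $\cH^{\alpha}_{\gamma,L,N,G,\delta_0}$) and the analyticity in $\phi$; the conclusions are then transferred back to $(f,\phi)$ through $\cL_{g,\tilde\phi}=\cL_{f,\phi}^{m}$, $\lambda_{f,\phi}=\hat\lambda_{g,\tilde\phi}^{1/m}$ and $P_{top}(g,\tilde\phi)=m\,P_{top}(f,\phi)$. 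Your plan instead re-derives the content of those citations by hand (projective cones and Birkhoff's theorem for the gap; strong convergence plus a uniform Lasota--Yorke estimate in the Keller--Liverani spirit for continuity in $f$; Kato's analytic perturbation theory for $\phi$), which is legitimate and more self-contained, and your diagnosis of where the difficulty sits (uniformity of the cone contraction over $f$ and $\phi$, and the loss of operator-norm continuity in $f$) matches the reasons the paper delegates these steps to \cite{CV2013,chaos}. Two points in your sketch should be sharpened if you carry it out: (i) Condition (C) at time $N$ gives only \emph{one} contracting branch per point, and with $q=\deg(f^{N})-1$ bad branches the weighted cone estimate does not close at time $N$; it is only at a large iterate $m$, where the counting argument of Proposition \ref{key proposition} shows that all but a sub-exponential proportion of branch compositions contain many contracting blocks (and $L$ is taken close to $1$), that an inequality of the form \eqref{eq. castro varandas 1} holds --- so your phrase ``after possibly enlarging $N$'' is hiding the actual combinatorial lemma rather than a routine iteration; (ii) since your contraction is obtained for a power $\cL^{n_0}_{f,\phi}$, you still owe the (short, but necessary) argument transferring the simple leading eigenvalue, the uniqueness statements and the pressure identity from the power back to $\cL_{f,\phi}$ and $(f,\phi)$ itself, which the paper does explicitly via the $m$-th root of the eigenvalue and $P_{top}(g,\tilde\phi)=m\,P_{top}(f,\phi)$.
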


\subsection{\bf $C^{r+\alpha}$-smooth setting}

If we deal with differentiable local diffeomorphisms $f$ and potentials $\phi$, then we can conclude that the maps and objects of Theorem A are even more regular.

Let $r \geq 1$ and $0<\alpha<1$ be, we define $\mathcal{H}^{r+\alpha}_{\gamma,N,L,G}$ by the set of $C^r$ maps whose their $r$-derivatives are $\alpha$-H\"older continuous and satisfy the condition \textbf{(C')} below:

\begin{center}
	\textbf{\textbf{(C')}} - For each $x\in M$ there is $y\in f^{-N}(x)$ such that $\|Df^{N}(y)^{-1}\|< \gamma.$
\end{center}

Consider the set $P^r(\vep,\vep')$ of $C^r$ potentials $\phi:M\to\R$ which satisfy the condition
\begin{equation}
	\sup \phi-\inf \phi=\epsilon_\phi<\vep \quad \text{  and } \quad  \|D\phi\|_{C^{r-1}}=\vep'_\phi<\vep'.
\end{equation}
for some $\vep>0$ and $\vep'>0$  chosen small enough (in Section 4),  depending only on $L$, $N$, $G$, $r$ and $\gamma$.

\begin{remark*}
	The sets $\cH^{r+\alpha}_{\gamma, L, N, G}$ and $P^r(\vep,\vep')$ are open sets with respect to the $C^{r+\alpha}$ and $C^r$ topologies, respectively. For every  $f \in \cH^{r+\alpha}_{\gamma, L, N, G}$ there exists a uniform $\delta_0$ such that $\eqref{L}$ is valid in a $C^1$-neighborhood of $f$.
\end{remark*}

\begin{maintheorem}\label{thm. spectralgapCr}(Spectral gap for smooth observables)
	Given $\gamma\in(0,1)$ and integers $r \geq 1$, $N, G \geq 2$, there are $L>1$, $\epsilon>0$ and $\vep'>0$ such that for all $f\in \cH^{r+\alpha}_{\gamma, L, N, G}$ and all potential $\phi\in P^r(\vep,\vep')$ the transfer operator $\cL_{f,\phi}$ has the spectral gap property in the space of $C^r$ observables.
	
	As a consequence, the unique eigenfunction $h_{f,\phi}$  associated to the eigenvalue  $\lambda_{f,\phi}$ is of class $C^r$ and the following maps are of class $C^{r-1}$:

	\begin{enumerate}[label=(\alph*)]
		\item $(f,\phi)\in \cH^{r+\alpha}_{\gamma, L, N, G} \times P^r(\vep,\vep') \mapsto \lambda_{f,\phi}\in \R$.
		\item $(f,\phi)\in \cH^{r+\alpha}_{\gamma, L, N, G}  \times P^r(\vep,\vep') \mapsto P_{top}(f,\phi)\in \R$.
		\item $(f,\phi) \in \cH^{r+\alpha}_{\gamma, L, N, G}  \times P^r(\vep,\vep') \mapsto h_{f,\phi}\in C^0$;
		\item $(f,\phi)\in \cH^{r+\alpha}_{\gamma, L, N, G}  \times P^r(\vep,\vep') \mapsto \nu_{f,\phi}\in C^{r}(M,\R)^*$.
		\item $(f,\phi)\in \cH^{r+\alpha}_{\gamma, L, N, G}  \times P^r(\vep,\vep') \mapsto \mu_{f,\phi}\in C^r(M,\R)^*$.
	\end{enumerate}
	Moreover these maps  are analytic with respect to $\phi$.
\end{maintheorem}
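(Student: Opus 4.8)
The plan is to bootstrap from Theorem~\ref{thm.spectralgap}. First observe that \textbf{(C')} is stronger than \textbf{(C)}: if $y\in f^{-N}(x)$ satisfies $\|Df^{N}(y)^{-1}\|<\gamma$, the associated inverse branch of $f^{N}$ contracts a neighbourhood of $x$, so $L_{N}(y)<\gamma'$ for some $\gamma'<1$; likewise a $C^{r-1}$ bound on $D\phi$ controls $|e^{\phi}|_{\alpha}$. Hence, choosing the constants in Theorem~\ref{thm. spectralgapCr} appropriately, $\cH^{r+\alpha}_{\gamma,L,N,G}\subset\cH^{\alpha}_{\gamma',L,N,G,\delta_0}$ (with the uniform $\delta_0$ of the Remark preceding the statement) and $P^{r}(\vep,\vep')\subset P^{\alpha}(\tilde\vep)$, so Theorem~\ref{thm.spectralgap} already delivers: the spectral gap on $C^{\alpha}$, the eigendata $\lambda_{f,\phi}$, $h_{f,\phi}\in C^{\alpha}$, $\nu_{f,\phi}$, $\mu_{f,\phi}=h_{f,\phi}\nu_{f,\phi}$, the identity $P_{top}(f,\phi)=\log\lambda_{f,\phi}$, exponential decay of correlations, exactness, joint continuity at the $C^{\alpha}$ level and analyticity in $\phi$. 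What is left is to upgrade $h_{f,\phi}$ to class $C^{r}$, to obtain the spectral gap on $C^{r}$, and to improve the dependence on $(f,\phi)$ to class $C^{r-1}$.

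Next I would establish a Lasota--Yorke (Doeblin--Fortet) inequality for $\cL_{f,\phi}$ on $C^{r}$. Writing $\cL_{f,\phi}^{n}\varphi(x)=\sum_{f^{n}(y)=x}e^{S_{n}\phi(y)}\varphi(y)$, $S_{n}\phi=\phi+\phi\circ f+\cdots+\phi\circ f^{n-1}$, and differentiating up to order $r$ with the Leibniz and Fa\`a di Bruno formulas, the top-order part is, for each $x$, a sum over the inverse branches of $f^{n}$ of $e^{S_{n}\phi(y)}\|Df^{n}(y)^{-1}\|^{r}$ multiplied by $\|D^{r}\varphi\|$, while the remaining terms are bounded by $C_{n}\|\varphi\|_{C^{r-1}}$, the constant $C_{n}$ being controlled because $\|D\phi\|_{C^{r-1}}<\vep'$ and the derivatives of $f$ up to order $r$ are uniformly bounded on $\cH^{r+\alpha}_{\gamma,L,N,G}$. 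Exactly as in the H\"older estimates behind Theorem~\ref{thm.spectralgap}, but with the exponent $\alpha$ replaced by the integer $r$, condition \textbf{(C')} together with $\sup\phi-\inf\phi<\vep$ forces, for some iterate $n_{0}=n_{0}(\gamma,N,r)$, that $\lambda_{f,\phi}^{-n_{0}}\sum_{f^{n_{0}}(y)=x}e^{S_{n_{0}}\phi(y)}\|Df^{n_{0}}(y)^{-1}\|^{r}<\theta<1$ uniformly in $x$ --- one keeps, for each preimage, the genuinely contracting branch guaranteed by \textbf{(C')}, the weights $e^{S_{n}\phi}$ being almost constant. This gives $\|\lambda_{f,\phi}^{-n_{0}}\cL_{f,\phi}^{n_{0}}\varphi\|_{C^{r}}\le\theta\|\varphi\|_{C^{r}}+C\|\varphi\|_{C^{r-1}}$, and since bounded subsets of $C^{r}(M,\R)$ are precompact in $C^{r-1}(M,\R)$ (Arzel\`a--Ascoli), Hennion's theorem yields that $\lambda_{f,\phi}^{-1}\cL_{f,\phi}$ is quasi-compact on $C^{r}$ with essential spectral radius $\le\theta^{1/n_{0}}$. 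The iterates $\lambda_{f,\phi}^{-n}\cL_{f,\phi}^{n}\um$ are then Cauchy in $C^{r}$ and converge to $h_{f,\phi}$ in $C^{\alpha}$ (Theorem~\ref{thm.spectralgap}); hence $h_{f,\phi}\in C^{r}$, the leading eigenvalue of $\cL_{f,\phi}$ on $C^{r}$ is $\lambda_{f,\phi}$, it is simple with eigenline $\R\,h_{f,\phi}$, and the rest of the $C^{r}$ spectrum lies strictly inside the disc of radius $\lambda_{f,\phi}$ --- the spectral gap on $C^{r}$. As in the H\"older case, the quantitative contraction is cleanest expressed through an $\cL_{f,\phi}$-invariant cone in $C^{r}$ strictly contracted in projective metric by the normalized operator, which I would carry over essentially verbatim.

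For the regularity of the dependence I would use perturbation theory for the isolated simple eigenvalue $\lambda_{f,\phi}$. In the $\phi$-direction there is no loss of derivatives, because $\cL_{f,\phi}\varphi=\cL_{f,0}(e^{\phi}\varphi)$ and $\phi\mapsto e^{\phi}$ is analytic from $C^{r}$ to $C^{r}$; the Kato--Rellich analytic perturbation of $\lambda_{f,\phi}$ and of its spectral projection then makes $\phi\mapsto\lambda_{f,\phi}$, $\phi\mapsto h_{f,\phi}\in C^{r}$ and $\phi\mapsto\nu_{f,\phi}\in C^{r}(M,\R)^{*}$ analytic, hence also $P_{top}(f,\phi)=\log\lambda_{f,\phi}$ and $\mu_{f,\phi}=h_{f,\phi}\nu_{f,\phi}$. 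In the $f$-direction the inverse branches $y^{f}_{i}(x)$ move with $f$, so differentiating $\cL_{f_{t},\phi}\varphi=\sum_{i}e^{\phi(y^{f_{t}}_{i})}\varphi(y^{f_{t}}_{i})$ in $t$ produces transport terms $\sum_{i}\langle\partial_{t}y^{f_{t}}_{i},\nabla(e^{\phi}\varphi)(y^{f_{t}}_{i})\rangle$ and higher analogues, consuming one spatial derivative of the test function per differentiation; thus $f\mapsto\cL_{f,\phi}$ is of class $C^{j}$ as a map into $L(C^{r},C^{r-j})$ for $0\le j\le r-1$, with explicit derivatives. Feeding this into the abstract perturbation result for quasi-compact operators with a simple leading eigenvalue on the scale $C^{r}\supset C^{r-1}\supset\cdots\supset C^{0}$ --- where one surrenders one unit of regularity of the underlying space per derivative (as in \cite{BS12, Sedro}) --- yields that $(f,\phi)\mapsto\lambda_{f,\phi}$, the eigenprojection, $h_{f,\phi}$ and $\nu_{f,\phi},\mu_{f,\phi}$ are of class $C^{r-1}$ jointly, with values respectively in $\R$, in $C^{0}$, and in $C^{r}(M,\R)^{*}$ --- precisely items (a)--(e); the weaker target $C^{0}$ for $h_{f,\phi}$ simply reflects that its top derivative varies continuously in $f$ only in a space with fewer derivatives.

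The hard part is the Lasota--Yorke inequality of the second step: for genuinely non-uniformly expanding maps, on part of $M$ every inverse-branch derivative bound exceeds $1$, so one must exploit \textbf{(C')} --- through the proposition of Section~3 describing the ``good'' and ``bad'' regions --- together with the small oscillation of $\phi$ to average out the expanding branches, and do this for all orders $r$ simultaneously so that the lower-order Fa\`a di Bruno terms do not spoil the contraction, which is exactly why $\|D\phi\|_{C^{r-1}}<\vep'$ is assumed. The derivative-loss bookkeeping of the third step is then routine once the scale of spaces is in place, and the $\phi$-analyticity is immediate from the multiplicative structure of $\cL_{f,\phi}$ in $\phi$.
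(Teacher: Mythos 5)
Your proposal is essentially correct, but it takes a genuinely different route from the paper. The paper re-proves nothing at the level of Lasota--Yorke or perturbation estimates: it applies Proposition~\ref{key proposition} to produce, for a suitable iterate $m$, the ``good/bad region'' structure (H1)--(H2) for $g=f^m$ and $\tilde\phi=\sum_{j=0}^{m-1}\phi\circ f^j$, then quotes \cite[Theorem 2.1]{BCV16} for the spectral gap of $\cL_{g,\tilde\phi}$ on $C^r$ and \cite[Theorem A]{chaos} for the $C^{r-1}$ (and analytic in $\phi$) dependence of the eigendata, and finally transfers everything back to $f$ through $\cL_{g,\tilde\phi}=\cL^m_{f,\phi}$ and uniqueness of the eigendata, exactly as in the proof of Theorem~\ref{thm.spectralgap}. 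Your plan instead rebuilds those two black boxes: a Fa\`a di Bruno/Lasota--Yorke estimate on $C^r$ plus Hennion's theorem for quasi-compactness, a bootstrap from the $C^\alpha$ gap to the $C^r$ gap, and a loss-of-one-derivative perturbation scheme on the scale $C^r\supset\cdots\supset C^0$ for the $C^{r-1}$ dependence on $f$ together with Kato-type analyticity in $\phi$. That is precisely the machinery inside \cite{BCV16} and \cite{chaos}, so your route buys self-containedness at the cost of redoing known results, whereas the paper's proof is a short reduction whose only new ingredient is Proposition~\ref{key proposition}.

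Two cautions on your sketch. First, the uniform top-order bound $\lambda_{f,\phi}^{-n_0}\sum_{f^{n_0}(y)=x}e^{S_{n_0}\phi(y)}\|Df^{n_0}(y)^{-1}\|^{r}<\theta<1$ is not a consequence of (C') and small oscillation alone, with $n_0=n_0(\gamma,N,r)$: (C') gives only one contracting branch per point, and a single non-contracting branch already contributes on the order of $L^{n_0 r}e^{n_0\epsilon_\phi}/\deg(f)^{n_0}$, which need not be small for a fixed $L$. What saves the estimate is the counting of Proposition~\ref{key proposition} (the proportion of non-contracting inverse branches of $f^m$ is eventually small) together with $L$ close to $1$ depending on $r$; you do acknowledge this in your final paragraph, but the phrase ``exactly as in the H\"older estimates behind Theorem~\ref{thm.spectralgap}'' is misleading, since the paper never performs such direct estimates at either regularity --- both theorems rest on the same reduction through the Main Proposition and the smallness condition \eqref{eq. castro varandas 1} (or its $C^r$ analogue from \cite{BCV16}). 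Second, in passing from quasi-compactness on $C^r$ to the gap, you should argue that every peripheral eigenvalue and generalized eigenvector of the $C^r$ operator lies in $C^\alpha$ and is therefore excluded by the $C^\alpha$ gap; the assertion that $\lambda_{f,\phi}^{-n}\cL^n_{f,\phi}\um$ is Cauchy in $C^r$ presupposes this and, as written, is circular. A last minor point, shared with the paper's own exposition: (C') does not literally imply (C) with the same constants, because $L_N(y)$ is a product of one-step Lipschitz constants and may exceed $\|Df^N(y)^{-1}\|$; this is harmless since all estimates only use the contraction of the branch of $f^N$, but it deserves a sentence.
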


\subsection{Statistical consequences}

As consequence of the spectral gap property, we can deduce the regularity of some statistical quantities involving the equilibrium state that we describe in the following.

In all the three Corollaries below, we assume that 
$(f,\phi)\in \cH^\alpha_{\gamma, L, N, G} \times P^\alpha(\vep)$ satisfy the assumptions of Theorem A, when $0 < \alpha \leq 1$, and that 
$(f,\phi)\in \cH^{r+\alpha}_{\gamma, L, N, G} \times P^r(\vep,\vep')$ satisfy the assumptions of Theorem B, when $r \geq 1$.

\subsubsection{Central Limit Theorem}

The property of exponential decay of correlations implies a Central Limit Theorem.
\begin{maincorollary}[Central Limit Theorem]\label{c.CLT}
	Let $\varphi$ be a H\"older
	continuous function and  
	$$
	\sigma_{f,\varphi,\mu}^2 = \sigma_\varphi^2 :=\int v^2 d\mu + 2\sum\limits_{j=1}^{\infty} v\cdot (v\circ f^j) \, d\mu,
	\quad \text{ where } \quad v=\varphi-\int \varphi \, d\mu.
	$$
	Then:
	\begin{itemize}
		\item[(i)] $\sigma_\varphi<\infty$ and $\sigma_\varphi=0$ if and only if $\varphi=u\circ f - u$ for some
		$u \in L^1(\mu)$. Furthermore, if $\sigma_\varphi>0$ then the following convergence on distribution
		\begin{equation*}
			\mu\left(x\in M: \frac{1}{\sqrt{n}}\sum\limits_{j=0}^{n-1}
			\left(\varphi(f^j(x))-\int \varphi d\mu\right)\in A\right)\to \frac{1}{\sigma_\varphi\sqrt{2\pi}}
			\int_A e^{-\frac{t^2}{2\sigma_\varphi^2}} dt,
		\end{equation*}
		holds as $n\to\infty$ for every interval $A\subset\real$.
	  
	\item[(ii)] For $r\geq 2$, the average $$\cH^{r+\alpha}(\gamma, L, N, G) \times P^r(\vep,\vep') \times C^r(M,\R) \ni 
	(f, \phi, \varphi) \to m_{\mu_{(f,\phi)}}(\varphi) := \int \varphi d\mu_{(f,\phi)} \in \R$$
	and  the variance 
	$$\cH^{r+\alpha}(\gamma, L, N, G) \times P^r(\vep,\vep') \times C^r(M,\R) \ni 
	(f, \phi, \varphi) \to \sigma^2_{f,\varphi,\mu_{(f,\phi)}} \in \R$$
	are $C^{r-1}$-maps and 	are analytic with respect only to $(\phi,\varphi)$. 
\end{itemize}
\end{maincorollary}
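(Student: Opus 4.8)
The plan is to deduce part (i) from the spectral gap by the Nagaev--Guivarc'h perturbative method, and to obtain part (ii) by identifying the mean and the variance with the first two derivatives of the topological pressure in the potential direction and then invoking the regularity already granted by Theorem \ref{thm. spectralgapCr}.

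For part (i), I will set $v=\varphi-\int\varphi\,d\mu$ (and write $S_{n}v=\sum_{j=0}^{n-1}v\circ f^{j}$) and work with the normalized operator $\widehat{\cL}\psi:=(\lambda_{f,\phi}h_{f,\phi})^{-1}\cL_{f,\phi}(h_{f,\phi}\psi)$, which satisfies $\widehat{\cL}\um=\um$, $\int\widehat{\cL}\psi\,d\mu=\int\psi\,d\mu$, the duality $\int(\psi\circ f^{n})\,g\,d\mu=\int\psi\,\widehat{\cL}^{\,n}g\,d\mu$, and inherits the spectral gap from $\cL_{f,\phi}$ (Theorem \ref{thm.spectralgap}), now with leading eigendata $(\um,\mu)$. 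Applying the exponential decay of correlations with $\psi=v$ gives $\bigl|\int(v\circ f^{j})\,v\,d\mu\bigr|\le K(v,v)\tau^{j}$, so the series defining $\sigma_{\varphi}^{2}$ converges absolutely and $\sigma_{\varphi}<\infty$; moreover $u_{0}:=\sum_{j\ge1}\widehat{\cL}^{\,j}v$ converges in $C^{\alpha}(M,\R)$ and a short computation using the duality yields
\[\sigma_{\varphi}^{2}=\int v^{2}\,d\mu+2\int v\,u_{0}\,d\mu=2\int v\,(I-\widehat{\cL})^{-1}v\,d\mu-\int v^{2}\,d\mu,\]
where $(I-\widehat{\cL})^{-1}$ is bounded on the codimension-one subspace $\{\int\psi\,d\mu=0\}$, on which $\widehat{\cL}$ has spectral radius $<1$. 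The dichotomy $\sigma_{\varphi}=0\iff\varphi=u\circ f-u$ with $u\in L^{1}(\mu)$ then follows by the standard argument: a coboundary gives $S_{n}v=u\circ f^{n}-u$, hence $\tfrac1{\sqrt n}S_{n}v\to0$ in probability, which is compatible with the limit law below only if $\sigma_{\varphi}=0$; conversely $\sigma_{\varphi}=0$ forces $\sup_{n}\|S_{n}v\|_{L^{2}(\mu)}<\infty$, whence $v$, and therefore $\varphi$, is an $L^{2}(\mu)$-coboundary. For the limit law I will put $\widehat{\cL}_{(t)}\psi:=\widehat{\cL}(e^{itv}\psi)$, observe that $t\mapsto\widehat{\cL}_{(t)}$ is real-analytic into the bounded operators on $C^{\alpha}(M,\R)$ with $\int e^{itS_{n}v}\,d\mu=\int\widehat{\cL}_{(t)}^{\,n}\um\,d\mu$, and apply Kato's analytic perturbation theory: for $|t|$ small there is a simple leading eigenvalue $\lambda(t)$, analytic in $t$ with $\lambda(0)=1$, and a uniform gap $\|\widehat{\cL}_{(t)}^{\,n}-\lambda(t)^{n}\Pi_{t}\|\le C\kappa^{n}$, $\kappa<1$. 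Since $\int v\,d\mu=0$ the first-order term vanishes and the second-order perturbation formula reproduces the resolvent expression above, so $\lambda(t)=1-\tfrac12\sigma_{\varphi}^{2}t^{2}+o(t^{2})$; substituting $t/\sqrt n$ gives $\int e^{i(t/\sqrt n)S_{n}v}\,d\mu\to e^{-\sigma_{\varphi}^{2}t^{2}/2}$, and L\'evy's continuity theorem yields the asserted Gaussian convergence when $\sigma_{\varphi}>0$.

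For part (ii), I fix $(f,\phi,\varphi)$ as in the statement; for $|s|$ small the potential $\phi+s\varphi$ still lies in $P^{r}(\vep,\vep')$, and I will use the classical pressure-derivative identities — valid here because $s\mapsto\cL_{f,\phi+s\varphi}$ is an analytic family with spectral gap, exactly as in Theorem \ref{thm. spectralgapCr} —
\[m_{\mu_{(f,\phi)}}(\varphi)=\frac{d}{ds}\Big|_{s=0}P_{top}(f,\phi+s\varphi),\qquad \sigma^{2}_{f,\varphi,\mu_{(f,\phi)}}=\frac{d^{2}}{ds^{2}}\Big|_{s=0}P_{top}(f,\phi+s\varphi).\]
Consider $G(f,\phi,\varphi,s):=P_{top}(f,\phi+s\varphi)$, the composition of the map $(f,\psi)\mapsto P_{top}(f,\psi)$ — which by Theorem \ref{thm. spectralgapCr} is $C^{r-1}$ and analytic in the potential $\psi$ — with the bilinear-plus-linear map $(f,\phi,\varphi,s)\mapsto(f,\phi+s\varphi)$. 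Hence $G$ is jointly $C^{r-1}$ in $(f,\phi,\varphi)$ and extends holomorphically in $s$ to a disc $\{|s|<\rho\}$. Writing the $s$-derivatives at $0$ as Cauchy integrals,
\[\frac{\partial^{k}G}{\partial s^{k}}(f,\phi,\varphi,0)=\frac{k!}{2\pi i}\oint_{|s|=\rho/2}\frac{G(f,\phi,\varphi,s)}{s^{k+1}}\,ds,\]
and using that $G(f,\phi,\varphi,s)$ is $C^{r-1}$ in $(f,\phi,\varphi)$ uniformly in $s$ on the circle, I conclude that $\partial_{s}^{k}G(\cdot,\cdot,\cdot,0)$ is again $C^{r-1}$; taking $k=1,2$ gives the $C^{r-1}$ regularity of the mean and of the variance (here $r\ge2$ is used so that $C^{r-1}$ is genuine differentiability). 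Analyticity with respect to $(\phi,\varphi)$ for fixed $f$ is immediate, since $(\phi,\varphi,s)\mapsto G(f,\phi,\varphi,s)$ is a composition of the $\psi$-analytic pressure with an affine map and hence analytic, and so are its $s$-derivatives at $0$.

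I expect the main obstacle to be the regularity bookkeeping in part (ii): differentiating the $C^{r-1}$ function $G$ in $s$ would in general cost one derivative, and the point that rescues the argument is precisely that $P_{top}$ depends \emph{analytically} on the potential (Theorem \ref{thm. spectralgapCr}), so that the $s$-derivatives are Cauchy integrals of $G$ and retain the full $C^{r-1}$ regularity in $(f,\phi,\varphi)$. A secondary technical point is to establish the two pressure-derivative identities in the present non-uniformly expanding setting, but this relies only on the spectral gap and the analytic perturbation theory already developed for Theorems \ref{thm.spectralgap} and \ref{thm. spectralgapCr}; everything else in part (i) is the standard Nagaev--Guivarc'h spectral argument together with the classical coboundary criterion.
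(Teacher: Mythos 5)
Your part (i) is fine and is, in substance, the same argument the paper invokes: the paper simply cites \cite[Corollary C]{CV2013}, whose proof is exactly the Nagaev--Guivarc'h scheme you carry out (normalized operator, perturbed operator $\widehat{\cL}_{(t)}$, analytic leading eigenvalue, coboundary dichotomy). Likewise, the mean in part (ii) is unproblematic: it is bilinear in $(\mu_{f,\phi},\varphi)$, so its $C^{r-1}$ regularity in $(f,\phi)$ and analyticity in $(\phi,\varphi)$ follow directly from Theorem~\ref{thm. spectralgapCr}(e), without any pressure-derivative detour.

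The genuine gap is in your treatment of the variance. You correctly identify the danger (joint $C^{r-1}$ regularity of $G(f,\phi,\varphi,s)=P_{top}(f,\phi+s\varphi)$ plus analyticity in $s$ for each fixed $(f,\phi,\varphi)$ does \emph{not} imply that $\partial_s^2G(\cdot,\cdot,\cdot,0)$ is $C^{r-1}$), but the Cauchy-integral fix you propose presupposes exactly what is missing. To write $\partial_s^kG(f,\phi,\varphi,0)$ as a contour integral over $|s|=\rho/2$ and differentiate under the integral in $(f,\phi,\varphi)$, you need (a) a holomorphic extension of $s\mapsto P_{top}(f,\phi+s\varphi)$ to a complex disc whose radius $\rho$ is locally uniform in $(f,\phi,\varphi)$, and (b) $C^{r-1}$ regularity of $(f,\phi,\varphi)\mapsto P_{top}(f,\phi+s\varphi)$ for \emph{complex} $s$ on the contour, with bounds uniform in $s$. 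Theorem~\ref{thm. spectralgapCr} is stated only for real potentials in $P^r(\vep,\vep')$ and gives neither of these; establishing them amounts to redoing the spectral-perturbation machinery for complexified potentials and, more importantly, to re-proving the $C^{r-1}$ dependence on $f$ there, which is precisely the hard point (recall $f\mapsto\cL_{f,\phi}$ is not even continuous in the $C^r$ operator norm, which is why this regularity requires the dedicated techniques of \cite{chaos} and \cite{BC19}). The paper sidesteps your construction entirely: it quotes \cite[Theorem D]{BCV16} for analyticity of the mean and variance in $(\phi,\varphi)$ and says the $C^{r-1}$ dependence on $f$ follows as in \cite[Theorem B]{BC19}, where the variance is handled through the spectral data (eigenprojections/resolvent of the normalized operator) rather than through second derivatives of the pressure along complexified lines. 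So your identities $m_{\mu}(\varphi)=\partial_sP_{top}(f,\phi+s\varphi)|_{s=0}$ and $\sigma^2_{f,\varphi,\mu}=\partial_s^2P_{top}(f,\phi+s\varphi)|_{s=0}$ are correct and could be the basis of a proof, but as written the regularity transfer for the second derivative rests on an unproved uniform, complexified version of Theorem~\ref{thm. spectralgapCr}; either prove that extension or argue as in \cite{BC19}/\cite{chaos} directly on the spectral objects.
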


\subsubsection{Free energy}
Given an observable $\psi: M \to \R$ and $t \in \R$, the free energy $\mathcal{E}_{f,\phi,\psi}$ is defined by: 
$$\mathcal{E}_{f,\phi,\psi}(t) := \underset{n \to \infty}{\limsup} \dfrac{1}{n} \log \int e^{t S_n \psi} d \mu_{f,\phi},$$
where $S_n\psi = \sum_{j=0}^{n-1} \psi\circ f^j$ is the Birkhoff sum. Then 
\begin{maincorollary}[Differentiability of the Free Energy]\label{C.free}
	For any  $\alpha$-H\"older observable $\psi: M \to \R$ there exists $t_{\phi,\psi}>0$ such that if $|t| \leq t_{\phi,\psi} $, then  $(f,\phi+t\psi)\in \cH^\alpha_{\gamma, L, N, G} \times P^\alpha(\vep)$, the free energy is well defined and satisfy:
	
	$$\mathcal{E}_{f,\phi,\psi} (t) := \underset{n \to \infty}{\limsup} \dfrac{1}{n} \log \int e^{t S_n \psi} d \mu_{f,\phi} = \log \lambda_{(f, \phi + t \psi)} - \log \lambda_{(f, \phi )} 	 $$
	As a consequence, $(t,\phi,\psi) \mapsto  \mathcal{E}_{f,\phi,\psi}(t) $ is analytic.
	It is valid that if $\psi$ is cohomologous to a constant then $t \mapsto \mathcal{E}_{f,\phi,\psi}$ is affine and, otherwise, it is strictly convex in $[-t_{\phi,\psi}, t_{\phi,\psi}]$. 
	
	If $(f,\phi)\in \cH^{r+\alpha}_{\gamma, L, N, G} \times P^r(\vep,\vep')$ satisfy the assumption of Theorem B, then $\mathcal{E}_{f,\phi,\psi} (t)$ is $C^{r-1}$.	
\end{maincorollary}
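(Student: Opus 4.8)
The plan is to deduce everything from Theorems~\ref{thm.spectralgap} and~\ref{thm. spectralgapCr} applied to the perturbed pair $(f,\phi+t\psi)$. First I would fix $t_{\phi,\psi}>0$ so small that $\phi+t\psi\in P^\alpha(\vep)$ for all $|t|\le t_{\phi,\psi}$ (respectively $\phi+t\psi\in P^r(\vep,\vep')$ in the smooth setting): this is immediate from the openness of these potential classes, using $\sup(\phi+t\psi)-\inf(\phi+t\psi)\le(\sup\phi-\inf\phi)+|t|(\sup\psi-\inf\psi)$ together with the continuity at $t=0$ of $|e^{\phi+t\psi}|_\alpha$ (resp. of $\|D(\phi+t\psi)\|_{C^{r-1}}$), shrinking $t_{\phi,\psi}$ if necessary. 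Then $\cL_{f,\phi+t\psi}$ has the spectral gap property, with leading eigenvalue $\lambda_{f,\phi+t\psi}>0$, eigenfunction $h_{f,\phi+t\psi}$ and reference measure $\nu_{f,\phi+t\psi}$, and the whole corollary reduces to the identity $\mathcal{E}_{f,\phi,\psi}(t)=\log\lambda_{f,\phi+t\psi}-\log\lambda_{f,\phi}$.

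To prove this identity I would compute with the transfer operator. Writing $\mu_{f,\phi}=h_{f,\phi}\,\nu_{f,\phi}$, so that $\int\varphi\,d\mu_{f,\phi}=\lambda_{f,\phi}^{-n}\int\cL_{f,\phi}^n(\varphi\,h_{f,\phi})\,d\nu_{f,\phi}$ for every $\varphi$ (from $\cL_{f,\phi}^{*}\nu_{f,\phi}=\lambda_{f,\phi}\nu_{f,\phi}$ and $\int h_{f,\phi}\,d\nu_{f,\phi}=1$), and using that $S_n\phi+tS_n\psi=S_n(\phi+t\psi)$ gives the pointwise identity $\cL_{f,\phi}^n\bigl(e^{tS_n\psi}h_{f,\phi}\bigr)=\cL_{f,\phi+t\psi}^n\bigl(h_{f,\phi}\bigr)$, one obtains
\begin{equation*}
\int e^{tS_n\psi}\,d\mu_{f,\phi}=\lambda_{f,\phi}^{-n}\int \cL_{f,\phi+t\psi}^n\bigl(h_{f,\phi}\bigr)\,d\nu_{f,\phi}.
\end{equation*}
By the spectral gap of $\cL_{f,\phi+t\psi}$ one has $\lambda_{f,\phi+t\psi}^{-n}\cL_{f,\phi+t\psi}^n g\to\bigl(\int g\,d\nu_{f,\phi+t\psi}\bigr)h_{f,\phi+t\psi}$ in $C^\alpha$ at an exponential rate; taking $g=h_{f,\phi}$ and integrating against $\nu_{f,\phi}$ yields
\begin{equation*}
\int e^{tS_n\psi}\,d\mu_{f,\phi}=\left(\frac{\lambda_{f,\phi+t\psi}}{\lambda_{f,\phi}}\right)^{n}\bigl(c(t)+O(\theta^n)\bigr),\qquad 0<\theta<1,
\end{equation*}
where $c(t)=\bigl(\int h_{f,\phi}\,d\nu_{f,\phi+t\psi}\bigr)\bigl(\int h_{f,\phi+t\psi}\,d\nu_{f,\phi}\bigr)>0$ since the eigenfunctions are bounded away from zero and the reference measures are probabilities. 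Taking $\tfrac1n\log$ and letting $n\to\infty$ proves the identity; in particular the $\limsup$ defining $\mathcal{E}_{f,\phi,\psi}(t)$ is a genuine limit, so the free energy is well defined.

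Analyticity is then formal. By Theorem~\ref{thm.spectralgap} the map $\phi\mapsto\lambda_{f,\phi}$ is analytic on $P^\alpha(\vep)$ with $\lambda_{f,\phi}>0$, while $(t,\phi,\psi)\mapsto\phi+t\psi$ is a polynomial, hence analytic, map into $C^\alpha(M,\R)$; composing with $\lambda$ and with $\log$ shows $(t,\phi,\psi)\mapsto\mathcal{E}_{f,\phi,\psi}(t)$ is analytic. In the $C^{r+\alpha}$ setting the same argument, now using Theorem~\ref{thm. spectralgapCr} (which gives $C^{r-1}$ dependence of $\lambda_{f,\phi}$ on $(f,\phi)$ and analyticity in $\phi$), shows $\mathcal{E}_{f,\phi,\psi}(t)$ is $C^{r-1}$ jointly and analytic in $(t,\phi,\psi)$.

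For the convexity dichotomy I would write $\mathcal{E}_{f,\phi,\psi}(t)=P_{top}(f,\phi+t\psi)-P_{top}(f,\phi)$ (Theorem~\ref{thm.spectralgap}(4)); the variational principle presents $t\mapsto P_{top}(f,\phi+t\psi)$ as a supremum of functions affine in $t$, hence convex, so $\mathcal{E}$ is convex and real-analytic and therefore either strictly convex or affine on $[-t_{\phi,\psi},t_{\phi,\psi}]$. If $\psi=c+u\circ f-u$ with $u$ continuous, then $\phi+t\psi$ is cohomologous to $\phi+tc$, whence $P_{top}(f,\phi+t\psi)=P_{top}(f,\phi)+tc$ and $\mathcal{E}_{f,\phi,\psi}(t)=tc$ is affine; conversely, if $\mathcal{E}$ is affine then $\mathcal{E}''(0)=0$, and identifying $\mathcal{E}''(0)$ with the variance $\sigma^2_{f,\psi,\mu_{f,\phi}}$ of Corollary~\ref{c.CLT} and applying Corollary~\ref{c.CLT}(i) shows $\psi$ is cohomologous to a constant. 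The routine parts here are the choice of $t_{\phi,\psi}$, the operator identity, and the analyticity bookkeeping; the point demanding the most care is this last equivalence — establishing cleanly that $\mathcal{E}''(t)=\sigma^2_{f,\psi,\mu_{f,\phi+t\psi}}$ (the second-derivative-of-pressure formula, to be extracted from the analytic perturbation of $\lambda_{f,\phi}$ underlying Theorem~\ref{thm.spectralgap} and Corollary~\ref{c.CLT}) and reconciling the $L^1(\mu)$-coboundary furnished by Corollary~\ref{c.CLT}(i) with a continuous transfer function through an argument of Liv\v{s}ic type, which is slightly delicate here because the equilibrium states in this class need not have full support.
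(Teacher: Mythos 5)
Your computation of the identity $\mathcal{E}_{f,\phi,\psi}(t)=\log\lambda_{f,\phi+t\psi}-\log\lambda_{f,\phi}$ is exactly the paper's: the same rewriting $\cL_{f,\phi}^n(e^{tS_n\psi}h_{f,\phi})=\cL_{f,\phi+t\psi}^n(h_{f,\phi})$, the same use of the spectral-gap convergence $\lambda_{f,\phi+t\psi}^{-n}\cL^n_{f,\phi+t\psi}h_{f,\phi}\to h_{f,\phi+t\psi}\int h_{f,\phi}\,d\nu_{f,\phi+t\psi}$, and the same appeal to Theorems~\ref{thm.spectralgap} and~\ref{thm. spectralgapCr} (i.e.\ to the analytic/$C^{r-1}$ dependence of $\lambda$) for the regularity statements; this part is correct and complete. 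Where you diverge is the cohomology dichotomy. For the affine direction the paper does not invoke the variational principle: it exhibits the explicit eigenvector $e^{t\tilde\psi}h_{f,\phi}$ of $\cL_{f,\phi+t\psi}$, giving $\lambda_{f,\phi+t\psi}=e^{tA}\lambda_{f,\phi}$ (and, as a bonus, $h_{f,\phi+t\psi}=e^{t\tilde\psi}h_{f,\phi}$); your variational-principle argument is fine for continuous transfer functions and is arguably more elementary, but note it silently fixes the regularity class of the coboundary. For the strict-convexity direction the paper argues directly that $\mathcal{E}''_{f,\phi,\psi}(t)>0$ for small $t$ when $\psi$ is not cohomologous to a constant (following the second-derivative-of-pressure computation of \cite[Proposition 5.2]{BCV16}), whereas you take the contrapositive through analyticity plus Corollary~\ref{c.CLT}(i). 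Both routes rest on the same unproved-in-detail identity $\mathcal{E}''(t)=\sigma^2_{f,\psi,\mu_{f,\phi+t\psi}}$, so your flagged gap is no worse than the paper's own level of detail; but your extra Liv\v{s}ic-type step is an artifact of mixing classes: Corollary~\ref{c.CLT}(i) only yields an $L^1(\mu_{f,\phi})$ transfer function, while your forward direction uses a continuous one (and with merely $L^1$ transfer the pressure-invariance argument does not go through, since $u$ need not be integrable for the competing invariant measures). Either run both directions in the Hölder/continuous class, handling the forward direction by the paper's eigenfunction identity and the converse by the variance formula as in \cite{BCV16}, or state the dichotomy in the $L^1$ sense throughout; as written, the two halves of your dichotomy do not quite meet, and the Liv\v{s}ic upgrade you mention (indeed delicate without full support) is the price of that mismatch rather than an intrinsic necessity.
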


\begin{remark}
	We remark that one novelty of the Theorem above is the differentiability of higher order ($C^{r-1}$) with respect to the dynamical system. In previous papers, such as \cite[Theorem E]{BCV16}, it was only proved the $C^1$-differentiability with respect to the dynamical system.
\end{remark}

\subsubsection{Legendre transform and large deviations principle}

	When $\psi$ is not cohomologous to a constant, then the convexity of $t \in [-t_{\phi,\psi}, t_{\phi,\psi}] \mapsto \mathcal{E}_{f,\phi,\psi}(t)$ implies that it is well defined the local Legendre transform $I_{f,\phi,\psi}$ given by 
	$$I_{f,\phi,\psi}(s) = \underset{ -t_{\phi,\psi} \leq t \leq t_{\phi,\psi}  }{\sup} (st - \mathcal{E}_{f, \phi, \psi}(t) ).$$
	
	The differentiability that we obtained above gives a large deviation principle with differentiable rate function, as below

\begin{maincorollary}[Large Deviation Principle with Differentiable Rate]\label{cor.I}
Let $V$ be a compact metric space, $\{(f_v, \phi_v, \psi_v)\}_{v\in V}$ 
a continuous parameterized injective family of maps in 
$\cH^{r+\alpha}_{\gamma, L, N, G} \times P^r(\vep,\vep')\times C^r(M,\R)$, $r \geq 2$.
If the observable $\psi_{v*} \in C^r$ is not cohomologous to a constant then there exist an open neighborhood $U$ of $v_*$ and an open interval $J$ such that: for all $v\in U$ and $[a,b] \subset J$ it is valid:
$$\underset{n \to \infty}{\limsup} \dfrac{1}{n} \log \mu_{f_v, \phi_v }\Big( x \in M, \dfrac{1}{n} S_n \psi_v (x) \in [a,b]  \Big)  \leq - \inf_{s \in [a,b]} I_{f_v, \phi_v, \psi_v}(s)$$
and  
$$\underset{n \to \infty}{\liminf} \dfrac{1}{n} \log \mu_{f_v, \phi_v }\Big( x \in M, \dfrac{1}{n} S_n \psi_v (x) \in (a,b)  \Big)  \geq - \inf_{s \in [a,b]} I_{f_v, \phi_v, \psi_v}(s).$$

And the rate function $(s,v) \in J \times U \mapsto I_{f_v, \phi_v, \psi_v}(s)$ 
 is of class $C^{r-1}$.
\end{maincorollary}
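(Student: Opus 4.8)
The plan is to derive the large deviation bounds as a consequence of the Gärtner–Ellis theorem applied to the free energy function $\mathcal{E}_{f_v,\phi_v,\psi_v}$ from Corollary~\ref{C.free}, and then to establish the $C^{r-1}$-regularity of the rate function $(s,v)\mapsto I_{f_v,\phi_v,\psi_v}(s)$ by a careful application of the implicit function theorem to the Legendre transform. First I would fix $v_*$ and use the hypothesis that $\psi_{v_*}$ is not cohomologous to a constant: by Corollary~\ref{C.free}, $t\mapsto \mathcal{E}_{f_{v_*},\phi_{v_*},\psi_{v_*}}(t)$ is real-analytic and strictly convex on $[-t_{\phi_{v_*},\psi_{v_*}},t_{\phi_{v_*},\psi_{v_*}}]$. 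Strict convexity is an open condition in the appropriate sense (the second derivative $\partial_t^2\mathcal{E}$ is continuous in $v$ by the $C^{r-1}$-joint regularity in Corollary~\ref{c.CLT}/Corollary~\ref{C.free}, and it is bounded away from zero on a compact subinterval), so there is a neighborhood $U$ of $v_*$ and a closed subinterval on which $\partial_t^2\mathcal{E}_{f_v,\phi_v,\psi_v}(t)\ge c>0$ uniformly for $v\in U$. I would then let $J$ be the interior of the image of this subinterval under $t\mapsto \partial_t\mathcal{E}_{f_v,\phi_v,\psi_v}(t)$, shrinking $U$ if needed so that a common such $J$ works for all $v\in U$.

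Next, for the large deviation inequalities themselves: for fixed $v\in U$, the upper bound follows from the exponential Chebyshev inequality, writing
$$\mu_{f_v,\phi_v}\Big(\tfrac1n S_n\psi_v\in[a,b]\Big)\le e^{-n(st-\frac1n\log\int e^{tS_n\psi_v}\,d\mu_{f_v,\phi_v})}$$
for appropriate sign choices of $t$ at the endpoints $s=a$ and $s=b$, taking $\limsup$, and invoking the identification of $\limsup\frac1n\log\int e^{tS_n\psi_v}\,d\mu_{f_v,\phi_v}$ with $\mathcal{E}_{f_v,\phi_v,\psi_v}(t)$ from Corollary~\ref{C.free}; optimizing over $t\in[-t_{\phi_v,\psi_v},t_{\phi_v,\psi_v}]$ and using that the supremum defining $I_{f_v,\phi_v,\psi_v}$ is attained in the interior (by strict convexity and the choice of $J$) yields the stated bound $-\inf_{s\in[a,b]}I_{f_v,\phi_v,\psi_v}(s)$. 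For the lower bound I would use the standard tilting/change-of-measure argument: since $\mathcal{E}_{f_v,\phi_v,\psi_v}(t)=\log\lambda_{(f_v,\phi_v+t\psi_v)}-\log\lambda_{(f_v,\phi_v)}$, the tilted measure $\mu_{f_v,\phi_v+t\psi_v}$ is again an equilibrium state in the class covered by Theorem~A (for $|t|\le t_{\phi_v,\psi_v}$), it satisfies a CLT by Corollary~\ref{c.CLT}, and its Birkhoff averages of $\psi_v$ concentrate at $\partial_t\mathcal{E}_{f_v,\phi_v,\psi_v}(t)$; a local CLT-type estimate (or the usual soft lower-bound argument in Gärtner–Ellis, using that $\mu_{f_v,\phi_v}$ and the tilt are mutually absolutely continuous with densities bounded above and below) then gives $\liminf\frac1n\log\mu_{f_v,\phi_v}(\tfrac1n S_n\psi_v\in(a,b))\ge -I_{f_v,\phi_v,\psi_v}(s)$ for any $s\in(a,b)$, hence $\ge-\inf_{s\in[a,b]}I_{f_v,\phi_v,\psi_v}(s)$.

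Finally, for the regularity of the rate function I would argue as follows. On the region where $\partial_t^2\mathcal{E}_{f_v,\phi_v,\psi_v}>0$ uniformly, the supremum in $I_{f_v,\phi_v,\psi_v}(s)=\sup_t(st-\mathcal{E}_{f_v,\phi_v,\psi_v}(t))$ is attained at the unique critical point $t=t(s,v)$ solving $\partial_t\mathcal{E}_{f_v,\phi_v,\psi_v}(t)=s$. Since the map $(t,v)\mapsto\partial_t\mathcal{E}_{f_v,\phi_v,\psi_v}(t)$ is $C^{r-2}$ jointly (one derivative is lost relative to the $C^{r-1}$ free energy; here is where $r\ge 2$ is used) and its $t$-derivative is bounded away from zero, the implicit function theorem gives $t(s,v)\in C^{r-2}$, and then $I_{f_v,\phi_v,\psi_v}(s)=s\,t(s,v)-\mathcal{E}_{f_v,\phi_v,\psi_v}(t(s,v))$. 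The envelope theorem identity $\partial_s I=t(s,v)$ shows we recover one degree of smoothness: $\partial_s I$ is $C^{r-2}$, hence $I$ itself is $C^{r-1}$ in $(s,v)$, as claimed. The main obstacle I anticipate is the lower-bound inequality: making the Gärtner–Ellis change-of-measure argument rigorous requires either a local central limit theorem for the tilted equilibrium states (which does not seem to be stated in the excerpt) or a careful soft argument using only the CLT of Corollary~\ref{c.CLT} together with the exact relation $\mathcal{E}_{f_v,\phi_v,\psi_v}(t)=\log\lambda_{(f_v,\phi_v+t\psi_v)}-\log\lambda_{(f_v,\phi_v)}$ and the spectral-gap-based control of $\int e^{tS_n\psi_v}\,d\mu_{f_v,\phi_v}=\lambda_{(f_v,\phi_v)}^{-n}\int \mathcal{L}_{f_v,\phi_v+t\psi_v}^n(h_{f_v,\phi_v})\,d\nu_{f_v,\phi_v}$; I would pursue the latter, exploiting that the spectral projection controls this integral up to exponentially small error and reduces the problem to the well-understood behavior of $\lambda_{(f_v,\phi_v+t\psi_v)}$.
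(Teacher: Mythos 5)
Your proposal is correct, and for the part the paper actually proves --- the $C^{r-1}$ regularity of $(s,v)\mapsto I_{f_v,\phi_v,\psi_v}(s)$ --- it follows the same route as the paper: uniform strict convexity of $\mathcal{E}_{f_v,\phi_v,\psi_v}$ near $v_*$, choice of $J$ inside the image of $t\mapsto \mathcal{E}'_{f_v,\phi_v,\psi_v}(t)$, the unique maximizer $t(s,v)$ solving $s=\mathcal{E}'_{f_v,\phi_v,\psi_v}(t)$ via the implicit function theorem, and the variational identity $I_{f_v,\phi_v,\psi_v}(s)=s\,t(s,v)-\mathcal{E}_{f_v,\phi_v,\psi_v}(t(s,v))$. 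The differences are these. First, the paper does not re-derive the two large deviation inequalities at all: it states them as a separate (unnumbered) theorem obtained directly from the G\"artner--Ellis theorem, using only the existence, differentiability and strict convexity of the free energy from Corollary~\ref{C.free}; your Chebyshev upper bound and tilting/change-of-measure lower bound are a more self-contained version of what G\"artner--Ellis packages abstractly, and in fact no local CLT is needed --- the standard G\"artner--Ellis lower bound at exposed points suffices, since every $s\in J$ is exposed with exposing point $t(s,v)$ in the interior of $[-t_{\phi_v,\psi_v},t_{\phi_v,\psi_v}]$, so the ``obstacle'' you flag is already handled by the abstract theorem (the only genuine care needed is that the limit, not just the limsup, exists on this interval, which Corollary~\ref{C.free} provides). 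Second, the regularity bookkeeping: the paper treats $F(v,t)=(v,\mathcal{E}'_{f_v,\phi_v,\psi_v}(t))$ as a $C^{r-1}$ diffeomorphism (no loss when differentiating in $t$, since $\mathcal{E}$ is analytic in $t$) and concludes $t(s,v)\in C^{r-1}$ directly, whereas you concede only $C^{r-2}$ for $\partial_t\mathcal{E}$ jointly and recover the missing derivative through the envelope identity $\partial_s I=t(s,v)$; that recovery is fine, but for joint $C^{r-1}$ regularity in $(s,v)$ you should also record the companion cancellation $\partial_v I=-\big(\partial_v\mathcal{E}_{f_v,\phi_v,\psi_v}\big)(t(s,v))$, which is $C^{r-2}$ for the same reason --- with that remark your argument is complete and, if anything, more careful than the paper's on this point.
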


We notice that large deviations principles for non-uniformly expanding maps are already known (\cite{araujo.pacifico, BCV16}), the novelty is the $C^{r-1}$-differentiability.

\section{The Main Proposition}
The following proposition is the main result that allows to connect our setting to the ones found in the literature (such as \cite{CV2013} and \cite{BCV16}).
In Section 4 we will see that, although our hypothesis are weaker than the one in those works, they imply the same type of results.

\begin{proposition}[Main  Proposition]\label{key proposition}
	Suppose that $L\geq 1$ is sufficiently close to 1 and $f \in \cH^{\alpha}_{\gamma, L, N, G}$, then for some integer $m_0 \geq 1$  and every $m \geq m_0$ there exists an open region $\mathcal{A}_m \subset M$ and a constant $\gamma_m \in (0,1)$ satisfying:
	
	\textbf{(H1)} $L_m(x) \leq L^m$ for every $x \in \mathcal{A}_m$ and $L_m(x) < \gamma_m$ for all $x \notin \mathcal{A}_m$.
	
	\textbf{(H2)} There exists a finite covering $\mathcal{U}_m$ of $M$ by open domains of injectivity for $f^m$ such that $\mathcal{A}_m$ can be covered by $q_m < \deg(f^m)$ sets of this covering.
\end{proposition}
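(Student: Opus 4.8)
The plan is to pass to the iterate $F:=f^{N}$ and to build $\cA_{m}$ out of the ``contracting region'' of $F$. For $F$, Condition (C) says that over every point at least one of the $\deg(F)=\deg(f)^{N}$ inverse branches is a $\gamma$-contraction, while every inverse branch is $L$-Lipschitz with $L$ close to $1$. First I would uniformise (C): fixing $\gamma'\in(\gamma,1)$ and using continuity of $x\mapsto L_{N}(x)$ together with compactness of $M$, one gets finitely many open domains of injectivity $Q_{1},\dots,Q_{a}$ of $F$ with $\sup_{Q_{i}}L_{N}<\gamma'$ and $\bigcup_{i}F(Q_{i})=M$, so the open set $\cC:=\{L_{N}<\gamma'\}$ meets every $F$-fibre. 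I would also fix a finite cover $\cR=\{R_{1},\dots,R_{b}\}$ of $M$ by open domains of injectivity of $F$, chosen so that each $R_{j}$ is either \emph{good} ($R_{j}\subseteq\cC$) or \emph{bad} (it meets $\{L_{N}\ge\gamma'\}$), the good ones covering $\cC$.

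Next, with $m=Nk$ (the general case follows by composing $F^{k}$ with $f^{m-Nk}$), I would pick a small $\beta\in(0,1)$ and then take $L$ close enough to $1$ that $c:=(\gamma')^{\beta}L^{1-\beta}<1$, and set
$$\cA_{m}:=\{x\in M:\ L_{m}(x)>c^{\,k}\},$$
which is open since $L_{m}$ is continuous. For every $x$ one has $L_{m}(x)=\prod_{j=0}^{k-1}L_{N}(F^{j}x)\le L^{k}\le L^{m}$ (here $L>1$, $N\ge2$), which is the first inequality of (H1); and if $x\notin\cA_{m}$ then $L_{m}(x)\le c^{k}<1$, so $\gamma_{m}:=\tfrac12(1+c^{k})\in(0,1)$ settles the second. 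The necessary smallness of $c^{k}$ fixes the threshold $m_{0}$. Crucially, the defining inequality imposes a dynamical restriction: if $x\in\cA_{m}$ then the orbit $x,Fx,\dots,F^{k-1}x$ visits $\cC$ at most $\beta k$ times, for otherwise $L_{m}(x)\le(\gamma')^{\beta k}L^{(1-\beta)k}=c^{k}$.

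For (H2) I would build the covering $\cU_{m}$ of $M$ by domains of injectivity of $f^{m}=F^{k}$ from $\cR$-itineraries:
$$\cU_{m}:=\Bigl\{\ \bigcap_{j=0}^{k-1}F^{-j}(R_{i_{j}})\ :\ (i_{0},\dots,i_{k-1})\in\{1,\dots,b\}^{k}\ \Bigr\},$$
each member being an open domain of injectivity of $f^{m}$, and $\cU_{m}$ covering $M$. If a member contains some $x\in\cA_{m}$, then at each of the (at least $(1-\beta)k$) indices $j<k$ with $F^{j}x\notin\cC$ the letter $R_{i_{j}}$ must be bad, since the good letters lie inside $\cC$. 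Hence $\cA_{m}$ is covered by those members whose itinerary uses a bad letter at more than $(1-\beta)k$ of the $k$ steps, and the number of such members is
$$q_{m}\le\sum_{l\le\beta k}\binom{k}{l}\,g^{\,l}\,b_{0}^{\,k-l},$$
where $b_{0}$ is the number of bad letters and $g=b-b_{0}$ the number of good ones.

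The step I expect to be the main obstacle is to make this bound genuinely $<\deg(f^{m})=\deg(f)^{Nk}$. For small $\beta$ the right-hand side equals $b_{0}^{\,k}$ up to a subexponential factor, so one needs $b_{0}<\deg(F)$; that is, the cover $\cR$ must be chosen so that strictly fewer than $\deg(F)$ of its members meet the non-contracting region $\{L_{N}\ge\gamma'\}$ — equivalently, that region must itself be coverable by $<\deg(F)$ domains of injectivity of $F$. The pointwise shadow of this is immediate from (C) (each $F$-fibre has a point in $\cC$, hence at most $\deg(F)-1$ points outside it), but promoting it to an honest finite sub-cover of the correct cardinality is delicate, because domains of injectivity taken over small balls proliferate; this is precisely the place where the hypothesis ``$L$ close to $1$'' is genuinely exploited — it forces $\cC$ to be large enough that its complement carries strictly smaller combinatorial size than the full system. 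Granting this, $q_{m}<\deg(f^{m})$ (indeed $q_{m}/\deg(f^{m})\to0$) after enlarging $m_{0}$, which is (H2).
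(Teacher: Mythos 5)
Your reduction to $F=f^N$, the definition $\cA_m=\{L_m>c^k\}$, and the verification of (H1) are fine, and the itinerary counting for (H2) is a reasonable combinatorial frame; but the decisive step is exactly the one you leave open, and it is not a removable technicality. Your bound $q_m\le\sum_{l\le\beta k}\binom{k}{l}g^{l}b_0^{k-l}$ beats $\deg(f^m)=\deg(F)^k$ only if the one-step alphabet $\cR$ can be chosen with $b_0<\deg(F)$ bad letters, i.e. only if the non-contracting set $\{L_N\ge\gamma'\}$ can be covered by strictly fewer than $\deg(f^N)$ injectivity domains of $f^N$. That is precisely hypothesis (H2) of \cite{CV2013,BCV16} applied to $f^N$, which is strictly stronger than Condition (C) and is exactly what this Proposition is designed to dispense with. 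Condition (C) gives only the fibre-wise statement that every $F$-fibre meets $\cC$; it does not prevent $\{L_N\ge\gamma'\}$ from meeting \emph{every} injectivity domain of \emph{every} finite cover (the contracting preimage may lie in different branches over different base points), in which case $b_0=\deg(F)$ and your estimate gives $q_m\ge b_0^{k}=\deg(f^m)$, i.e. nothing. The sentence ``$L$ close to $1$ forces $\cC$ to be large'' is unsubstantiated and is not true in general: $L\approx1$ bounds the Lipschitz constants of all inverse branches but carries no information about how many injectivity domains are needed to cover $\{L_N\ge\gamma'\}$, which can be all of them.

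The paper circumvents exactly this obstruction by \emph{not} seeking a deficient one-step alphabet. It covers $M$ by $D_k$ balls of radius $\delta_m$ so small that all inverse branches of $f^m$, $m=\ell kN$, are defined on each ball, and counts, per ball, the branches of $f^m$ — compositions of $\ell k$ branches of $f^N$ — that contain fewer than $k$ contracting blocks. Condition (C) is applied \emph{along each branch}: at every block, at most $G^N-1$ of the $G^N$ continuations fail to contract, so per ball there are at most $B_k=\sum_{j<k}\binom{\ell k}{j}(G^N-1)^{\ell k-j}$ such branches, exponentially fewer than $G^{N\ell k}$ once $\ell$ is large; $\cA_m$ is the union of the images of the balls under these branches, and the role of ``$L$ close to $1$'' is only to keep the cardinality $D_k\lesssim C L^{kN\dim M}$ of the fine ball cover (together with the loss $\binom{\ell k}{k}\le(e\ell)^k$) small enough that $D_kB_k<G^{\ell kN}$, via the choice \eqref{estimate.1}--\eqref{estimate.2}. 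In short, the deficiency in the count is produced at the level of inverse branches of a high iterate over a fine cover, where (C) can be used pointwise along the orbit of the branch, rather than at the level of a fixed one-step cover — which is where your argument stalls; as written, the proposal assumes a statement equivalent in strength to the conclusion for $f^N$ and therefore has a genuine gap.
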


Let us state first an useful lemma for the proof of Proposition~\ref{key proposition}.

\begin{lemma}
	There exists some constant $C>0$ such that for every $r \in (0,1)$ it is possible to cover $M$ with at most $C r^{- \dim M}$ balls of radius $r$.
\end{lemma}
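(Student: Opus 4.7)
The approach is the classical packing-to-covering argument combined with volume comparison on the compact Riemannian manifold $M$. Let $d = \dim M$. First, I would choose a maximal $(r/2)$-separated set $\{x_1, \dots, x_N\} \subset M$, so that the open balls $B(x_i, r/2)$ are pairwise disjoint. Maximality of this separated set forces $M = \bigcup_i B(x_i, r)$: any $y \in M$ failing this would be $(r/2)$-separated from all $x_i$, contradicting maximality. Hence it suffices to bound the cardinality $N$.

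For that bound I would invoke Riemannian volume comparison. Since $M$ is compact, its sectional curvatures are bounded and its injectivity radius is bounded below, so there exist constants $c_0 > 0$ and $r_0 \in (0,1)$ with $\operatorname{vol}(B(x,s)) \geq c_0\, s^d$ for every $x \in M$ and every $s \in (0, r_0]$. Disjointness of the balls $B(x_i, r/2)$ gives, for $r \leq r_0$,
$$N \cdot c_0 (r/2)^d \;\leq\; \sum_{i=1}^N \operatorname{vol}(B(x_i, r/2)) \;\leq\; \operatorname{vol}(M),$$
so $N \leq (2^d \operatorname{vol}(M)/c_0)\, r^{-d}$.

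The remaining range $r \in (r_0, 1)$ is handled trivially: one may reuse the cover obtained at scale $r_0$, whose cardinality $N_0$ is finite, and since $r^{-d} \geq 1$ on this range, it is enough to take $C := \max\{2^d \operatorname{vol}(M)/c_0,\, N_0\}$. I do not anticipate any serious obstacle; the volume comparison and the packing-to-covering trick are entirely standard, and the statement is essentially the textbook fact that the $r$-covering number of a compact $d$-dimensional Riemannian manifold is $O(r^{-d})$.
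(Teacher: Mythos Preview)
Your argument is correct in substance, with one harmless slip: an $(r/2)$-separated set only ensures disjointness of the balls $B(x_i,r/4)$, not $B(x_i,r/2)$; either start from an $r$-separated set or halve the radius in the volume estimate---only the constant changes. The route, however, is genuinely different from the paper's. You run the standard packing-to-covering argument and invoke a Riemannian volume lower bound $\operatorname{vol}(B(x,s))\ge c_0 s^{\dim M}$, which uses compactness through curvature and injectivity-radius control. The paper instead reduces to the Euclidean case via finitely many bi-Lipschitz charts $\varphi_i:V_i\to B(0,2)\subset\R^{\dim M}$: one covers each $B(0,1)$ by $O(r^{-\dim M})$ Euclidean balls of radius $r/2$, and the Lipschitz bound on $\varphi_i^{-1}$ guarantees each pulled-back set lies inside a single metric ball of radius $r$ in $M$. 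Your approach is the metric-geometry proof and transfers verbatim to any Ahlfors $d$-regular space; the paper's chart argument is more elementary in that it only uses the smooth manifold structure and local bi-Lipschitz equivalence with $\R^{\dim M}$, avoiding volume comparison altogether.
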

\begin{proof}
	If $M$ is a compact subset of $\R^k$, for some $k$, and $d$ is the distance induced by the Euclidean norm then the result is valid for $C=(\sqrt{2}\diam{M})^{\dim{M}}$. If we consider another norm, we obtain the same result with some constant $\tilde C$ slightly worse, since every norm in $\R^k$ is equivalent.
	
	Denote by $B(x,a)$ the Euclidean ball of radius $a$ centered in $x$.
	Consider finite charts $\varphi_i: V_i \subset M \to B(0,2) \subset \R^k$ such that $M$ is covered by $\{  \varphi_i^{-1}(B(0,1))\}_i$  and such that $1/2 < \|D\varphi_i^{-1}\| \leq \|D\varphi_i\| \leq 2$. Every set $ \varphi_i^{-1}(B(x,r/2))$ is contained in a single ball of radius $r$, so we can cover $B(1)$ by $C r^{- \dim M}$ balls of radius $r/2$ and consider their inverse images by the charts, which is a covering of $M$ as we want.	 
\end{proof}

In the proof of Proposition \ref{key proposition} we say that a inverse branch $f^{-k}$ of $f^k$, defined in a ball of radius $\delta_k$, contracts at rate $\gamma$ if
$$d(f^{-k}(y), f^{-k}(z) ) \leq \gamma d(y,z), \quad \forall y, z \in B(x,\delta_k).$$

Now we proceed to the proof of Proposition \ref{key proposition}.

\begin{proof}[Proof of Proposition \ref{key proposition}]

Consider $L$ sufficiently close to $1$  so that we can take $\ell = \ell(L)$ large satisfying:
\begin{align}\label{estimate.1}
	&\tilde\gamma = \gamma L^{N(\ell-1)}  < 1  \quad\quad \quad  \text{  and  }  \\  
	&e^{\frac{1}{\ell}} \ell^{\frac{1}{\ell}}L^{N \dim M} < \dfrac{G^N}{G^N-1}. \label{estimate.2}
\end{align}

	Fix $\delta$ such that for every ball of radius $\delta$ are well defined the $\deg(f)$ inverse branches of $f$.
	Then for $\displaystyle \delta_j= \delta L^{-j}$, for every ball of radius $\delta_j$ are well defined the $\deg(f)^j$ inverse branches of $f^j$.
	
	Consider an open cover $\mathcal{U}_j = \{U_i\}_{i=1,\cdots D_j}$ of $M$ by balls of radius at most $\delta_j$, formed by at most $D_j=C \delta_j^{-j \dim M}$ balls of radius $\delta^j$.
	
Due to the choice of $L$ we have that	
for every $U_i$, the inverse branches of $f^{\ell N}$ at $U_i$ are composition of inverse branches of $f^{N}$, if one of these contracts at rate $\gamma$ then the composition contracts $\tilde \gamma$. So there are at most $(\deg(f)^N-1)^\ell$ inverse branches of $f^{\ell N}$ at $U_i$ that do not contracts at rate $\tilde \gamma$.
		
	Now, for every $k \geq 1$, the $\deg(f)^{\ell kN}$ inverse branches of $f^{\ell kN}$ at $U_i$ are composition of $\ell k$ inverse branches of $f^N$. If $k$ of these inverse branches contracts at rate $\tilde \gamma$, then the composition contracts at rate  $\tilde \gamma^k$.	
	We can count how many inverse branches of $f^{\ell kN}$  at $U_i$ do not contracts $\tilde \gamma$: they are composition of $\ell k$ inverse branches of $f^N$ where at most $k-1$ of them contract at rate $\tilde \gamma$; so they are at most:
	$$
	B_k=\sum_{j=0}^{k-1}{{\ell k}\choose{j}} (G^N-1)^{\ell k - j}
	$$
	
	Since we have covered the manifold $M$ by $D_k$ open sets of the open cover $\mathcal{U}_k$, we can define the set $A_k$ as the union of the open sets that are the inverse images of $U_i$ by all the inverse branches of $f^{\ell kN}$, $i=1,\cdots, D_k$ that do not contracts at rate $\tilde \gamma^k$. If $q_k$ is the cardinality of this cover, we have 
	$$q_k \leq D_k \cdot B_k$$
	
	The Proposition will follow for $m=\ell k N$ if 
	\begin{equation}\label{equation}
		q_k < G^{\ell kN}
	\end{equation}
 since we can consider $\gamma_m = \tilde \gamma^k$, $L_m= L^{\ell k N}$,  $\mathcal{A}_m= A_k$ and $q_m = q_k$.
	
	But \eqref{equation} this is true for $k$ sufficiently large, this can be verified using the relation for $0 \leq j \leq k$: $$\displaystyle  {{\ell k}\choose{j}} \leq {{\ell k}\choose{k}} \leq \big( \dfrac{e\ell}{k}\big)^k \leq (e\ell)^k.$$ Actually:
	\begin{align*}
	A_k &\leq D_k (G^N-1)^{\ell k }k  (e\ell)^{k} \\
	&\leq C k (e\ell)^k L^{kN \dim M} (G^N-1)^{\ell  k }   \\
	&= C k \Big[ L^{N \dim M} (G^N-1) (e \ell)^{\frac{1}{\ell}}   \Big]^{ \ell k}.
	\end{align*}
By \eqref{estimate.1}, it follows that this number is smaller than $G^{\ell kN}$  for all $k$ sufficiently large.

Finally, the result is valid for every $m$ large enough. Consider $k_0$  large enough so that $\tilde{\tilde{\gamma}} := \tilde \gamma^{k_0} L^{\ell N} < 1$, then  $m > m_0 := \ell k_0 N$. 

	If $m > m_0$ is not multiple of $\ell N$, we can write $ m = \ell N k + r$, with $0\leq r < \ell N$, so every inverse branch of $f^{ m}$ is the composition of $k$ inverse branches of $f^{\ell N}$ with $r$ inverse branches of $f$. If those $k$ inverse branches contracts at rate $\tilde\gamma$, then the composition contracts at rate $\tilde{\tilde{\gamma}}$.
	
	This allows to construct $\mathcal{A}_{ m }$ identically: define the set $A_k$ as the union of the open sets that are the inverse images of $U_i \in \mathcal{U}_k$ by all the inverse branches of $f^{ m}$, $i=1,\cdots, D_k$ that do not contracts at rate $\tilde{\tilde{ \gamma}}$. If $Q_{m}$ is the cardinality of this cover, we have 
$Q_{ m} \leq  q_k C_1    $ for some constant $C_1$ that do not depends on $k$ and $G^{m} \geq G^{ lk N} C_2 $ for another constant $C_2$. Then the cardinality of $\mathcal{A}_{\tilde m }$ is smaller than $G^{\tilde m}$ if $k$ is large enough.

\end{proof}

\section{Proof of the Theorems}

\begin{proof}[Proof of Theorem~\ref{thm.spectralgap}]
For $(f,\phi)\in \cH^\alpha_{\gamma, L, N, G} \times P^\alpha(\vep)$,
consider some $m$ given by Proposition~\ref{key proposition} and the corresponding 
$L_m=L^m$, $\gamma_m \in (0,1)$ and $q_m<\deg(f^m)$. Then we consider 
$L$ sufficiently close to 1 and $\vep>0$ small enough  such that:
\begin{equation}\label{eq. castro varandas 1}
(1+\vep)e^\vep \Big(\frac{\left( \deg(f^m) - q \right) \gamma_m^{\alpha} + qL_m^\alpha \left[1 + (L_m - 1)^\alpha \right]}{\deg(f^m)}\Big)  + 2\vep L_m^\alpha \text{diam}(M)^\alpha < 1.
\end{equation}

This allows to apply \cite[Theorem A]{CV2013} 
to the map $g=f^m$ and the potential $\tilde \phi=\sum_{j=0}^{m-1}\phi\circ f^j$ to conclude that 
the transfer operator $\cL_{g,\tilde\phi}$ has the spectral gap property in the space of $C^\alpha$-observables.
If  $\hat\lambda_{g,\tilde\phi}=\rho(\mathcal{L}_{g,\tilde\phi})$  denotes the spectral radius of $\mathcal{L}_{g,\tilde\phi}$ it follows  that
	there exists a unique probability measure $\hat\nu_{g,\tilde\phi}$ 
	 such that 
	$\mathcal{L}_{g,\tilde\phi}^*\hat\nu_{g,\tilde\phi}=\hat\lambda_{g,\tilde\phi} \hat\nu_{g,\tilde\phi}$,  
	there is a unique positive function $\hat h_{g,\tilde\phi}: M \to \R$ of class $C^\alpha$ bounded from zero and infinity such that $\mathcal{L}_{g,\tilde\phi}\hat h_{g,\tilde\phi}=\hat\lambda_{g,\tilde\phi}\hat h_{g,\tilde\phi}$
	with $\int\hat h_{g,\tilde\phi}\, d\hat\nu_{g,\tilde\phi}=1$, 
	the $g$-invariant measure $\hat\mu_{g,\tilde\phi}=\hat h_{g,\tilde\phi}\hat\nu_{g,\tilde\phi}$ is the unique equilibrium state of $(g,\tilde\phi)$, it has exponential decay of correlation for $C^\alpha$ observables and is exact.

Since $\cL_{g,\tilde\phi}=\cL^m_{f,\phi}$, the transfer operator $\cL_{f,\phi}$ also has the spectral gap property in the space of $C^\alpha$ observables with
$\lambda_{f,\phi}=\sqrt[m]{\hat\lambda_{g,\tilde\phi}}=\rho(\mathcal{L}_{f,\phi})$ denoting the spectral radius of  $\mathcal{L}_{f,\phi}$.
Since we also have $(\mathcal{L}^*_{f,\phi})^m\hat\nu_{g,\tilde\phi}=\hat\lambda_{g,\tilde\phi} \hat\nu_{g,\tilde\phi}$ and $\mathcal{L}_{f,\phi}^m\hat h_{g,\tilde\phi}=\hat\lambda_{g,\tilde\phi}\hat h_{g,\tilde\phi}$, by uniqueness it follows that  $\nu_{f,\phi}=\hat\nu_{g,\tilde\phi}$ and  $h_{f,\phi}=\hat h_{g,\tilde\phi}$ are the unique eigenvectors of $\cL_{f,\phi}^*$ and $\cL_{f,\phi}$ associated to the common eigenvalue $\lambda_{f,\phi}=\sqrt[m]{\hat\lambda_{g,\tilde\phi}}$, respectively, and satisfy $\int h_{f,\phi}\, d\nu_{f,\phi}=1$.

Since $P_{top}(g,\tilde\phi)=m P_{top}(f,\phi)$, defining $\mu_{f,\phi}=\hat\mu_{g,\tilde\phi}=\hat h_{g,\tilde\phi}\hat\nu_{g,\tilde\phi}$ and   $P_{\mu_{f,\phi}}(g,\tilde\phi)=m P_{\mu_{f,\phi}}(f,\phi)$, we conclude that $\mu_{f,\phi}$ is the unique equilibrium state of $(f,\phi)$. Exponential decay and exactness of $\mu_{f,\phi}$ follow immediately from the definition, since it holds for $f^m$.

Now the continuous variation of the  maps
\begin{enumerate}[label=(\alph*)]
	\item $(f,\phi)\in \cH^\alpha_{\gamma, L, N, G} \times P^\alpha(\vep) \mapsto \lambda_{f,\phi}\in \R$.
	\item $(f,\phi) \in \cH^\alpha_{\gamma, L, N, G} \times P^\alpha(\vep) \mapsto h_{f,\phi}\in C^0$;
	\item $(f,\phi)\in \cH^\alpha_{\gamma, L, N, G} \times P^\alpha(\vep) \mapsto \nu_{f,\phi}\in C^{\alpha}(M,\R)^*$.
	\item $(f,\phi)\in \cH^\alpha_{\gamma, L, N, G} \times P^\alpha(\vep) \mapsto \mu_{f,\phi}\in C^{\alpha}(M,\R)^*$.
\end{enumerate}
follow from 
\cite[Theorem A]{chaos}.  The fact that these maps vary analytic with respect to $\phi$ also follows from 
Theorem A in \cite{chaos}.

\end{proof}

\begin{proof}[Proof of Theorem~\ref{thm. spectralgapCr}]
For $(f,\phi)\in \cH^{r+\alpha}_{\gamma, L, N, G} \times P^r(\vep,\vep')$,
consider some $m$ given by Proposition~\ref{key proposition} and the corresponding 
$L_m=L^m$, $\gamma_m \in (0,1)$ and $q_m<\deg(f^m)$. Then we consider 
$L$ sufficiently close to 1 and $\vep, \vep'>0$ small enough  such that
we can apply   \cite[Theorem 2.1]{BCV16} (Theorem 2.1) and \cite[Theorem A]{chaos} 
for the map $g=f^m$ and the potential $\tilde \phi=\sum_{j=0}^{m-1}\phi\circ f^j$. 
Then we conclude that the transfer operator $\cL_{g,\tilde\phi}$ has the spectral gap property in the space of $C^r$ observables.
Denoting by $\hat\lambda_{g,\tilde\phi}=\rho(\mathcal{L}_{g,\tilde\phi})$ the spectral radius of $\mathcal{L}_{g,\tilde\phi}$, it follows that 
	there exists a unique probability measure $\hat\nu_{g,\tilde\phi}$  such that 
	$\mathcal{L}_{g,\tilde\phi}^*\hat\nu_{g,\tilde\phi}=\hat\lambda_{g,\tilde\phi} \hat\nu_{g,\tilde\phi}$, 
	there is a unique positive function $\hat h_{g,\tilde\phi}: M \to \R$ of class $C^r$ bounded from zero and infinity such that $\mathcal{L}_{g,\tilde\phi}\hat h_{g,\tilde\phi}=\hat\lambda_{g,\tilde\phi}\hat h_{g,\tilde\phi}$
	with $\int\hat h_{g,\tilde\phi}\, d\hat\nu_{g,\tilde\phi}=1$, 
	the $g$-invariant measure $\hat\mu_{g,\tilde\phi}=\hat h_{g,\tilde\phi}\hat\nu_{g,\tilde\phi}$ is the unique equilibrium state of $(g,\tilde\phi)$, it has exponential decay of correlation for $C^r$ observables and is exact.

The same argument used in the proof of Theorem~\ref{thm.spectralgap} guarantees the transfer operator $\cL_{f,\phi}$ has the spectral gap property in the space of $C^r$ observables and the properties (1), (2) and (3) above hold for $\cL_{f,\phi}$ replacing $h_{g,\tilde\phi}$, $\nu_{g,\tilde\phi}$ and $\lambda_{g,\tilde\phi}$ by
$h_{f,\phi}=h_{g,\tilde\phi}$, $\nu_{ f,\phi}=\nu_{g,\tilde\phi}$ and $\lambda_{f, \phi }=\sqrt[m]{\lambda_{g,\tilde\phi}}$, respectively.

Now using \cite{chaos} again, it follows that the following maps are  $C^{r-1}$:
\begin{enumerate}
	\item $(f,\phi)\in \cH^{r+\alpha}_{\gamma, L, N, G} \times P^r(\vep,\vep') \mapsto \lambda_{f,\phi}\in \R$.
	\item $(f,\phi) \in \cH^{r+\alpha}_{\gamma, L, N, G} \times P^r(\vep,\vep') \mapsto h_{f,\phi}\in C^0$;
	\item $(f,\phi)\in \cH^{r+\alpha}_{\gamma, L, N, G} \times P^r(\vep,\vep') \mapsto \nu_{f,\phi}\in C^{r}(M,\R)^*$.
	\item $(f,\phi)\in \cH^{r+\alpha}_{\gamma, L, N, G} \times P^r(\vep,\vep') \mapsto \mu_{f,\phi}\in C^r(M,\R)^*$.
\end{enumerate}
Moreover, these maps are analytic with respect to $\phi$.
\end{proof}

\subsection*{Differentiability of the mean and variance}

As consequence of the exponential decay of correlations, 
it follows  the Central Limit Theorem. We are mainly interested in the differentiability of the mean and the variance present in this theorem. 

\begin{proof}[Proof of Corollary \ref{c.CLT}]
	Part (i) follows as a consequence of the spectral gap property just as in \cite[Corollary C]{CV2013}. 
	
To part (ii) we use \cite[Theorem D]{BCV16} to conclude that the average 
	$$
	\cH^{r+\alpha}_{\gamma, L, N, G} \times P^r(\vep,\vep') \times C^r(M,\R) \ni 
	(f, \phi, \varphi) \to m_{\mu_{(f,\phi)}}(\varphi) := \int \varphi d\mu_{(f,\phi)} \in \R
	$$
	and  the variance 
	$$
	\cH^{r+\alpha}_{\gamma, L, N, G} \times P^r(\vep,\vep') \times C^r(M,\R) \ni 
	(f, \phi, \varphi) \to \sigma^2_{f,\varphi,\mu_{(f,\phi)}} \in \R
	$$
	are analytic with respect to $(\phi,\varphi)$.
	The proof that these maps are also $C^{r-1}$ with respect to $f$  follows similarly as in Theorem B of \cite{BC19}.
\end{proof}

\subsection*{Differentiability of the free energy}
\begin{proof}[Proof of Corollary \ref{C.free}]
	This proof is similar to the proof of \cite[Proposition 5.2]{BCV16}, we describe here the main steps.
	Observe that
	$$\int e^{t S_n\psi} d\mu_{f,\phi}  = \int \lambda_{f,\phi}^{-n} \mathcal{L}^n_{f,\phi}(h_{f,\phi} e^{tS_n\psi}) d\nu_{f,\phi} = \Big( \dfrac{\lambda_{f,\phi+t\psi }}{\lambda_{f,\phi}} \Big)^n \int \lambda_{f,\phi + t\psi}^{-n} \mathcal{L}^n_{f,\phi + t\psi}(h_{f,\phi}) d\nu_{f,\phi} $$ 
	
	Since $\lambda_{f,\phi + t\psi}^{-n} \mathcal{L}^n_{f,\phi+t\psi}(h_{f,\phi})$ converges uniformly to $h_{f,\phi + t\psi} \int h_{f,\phi} d\nu_{f,\phi + t\psi}$, by the   dominated convergence theorem:
	$$\mathcal{E}_{f,\phi,\psi}(t)=\lim_n \dfrac{1}{n} \log \int e^{t S_n\psi} d\mu_{f,\phi}  =  \log \lambda_{f,\phi+t\psi} - \log \lambda_{f,\phi} = P_{top}(f,\phi+t\psi) - P_{top}(f,\phi). $$
	
	If $\psi=\tilde\psi\circ f-\tilde\psi+A$ for some observable $\tilde\psi$ and constant $A$, then  $t \mapsto\mathcal{E}_{f,\phi,\psi}(t)$ is a linear map. In fact, we have
	$$
	\cL_{f,\phi+t\psi}(e^{t\tilde\psi}h_{f,\phi})=(e^{tA}\lambda_{f,\phi})e^{t\tilde\psi}h_{f,\phi}
	$$
	which implies that $\lambda_{f,\phi+t\psi}=e^{tA}\lambda_{f,\phi}$ and $h_{f,\phi+t\psi}=e^{t\tilde\psi}h_{f,\phi}$. In particular, $\mathcal{E}_{f,\phi,\psi}(t)=tA$. 
	When $\psi$ is not cohomologous to a constant we can prove that $\mathcal{E}''_{f,\phi,\psi}(t)>0$ for all $t$ small enough, and it follows the convexity.
	
\end{proof}

As a consequence, let us notice some properties of the free energy function:

\begin{corollary}
	For any Holder continuous potential $\psi$ so that $\int \psi d\mu_{f,\phi}=0$ the free energy function $[-t_{\phi,\psi}, t_{\phi,\psi}] \ni t \mapsto \mathcal{E}_{f,\phi,\psi}(t)$ satisfies:
	
	a)  $\mathcal{E}_{f,\phi,\psi}(0)=0$ and $\mathcal{E}_{f,\phi,\psi} (t) \geq 0 $ for all $t \in (-t_{\phi,\psi},t_{\phi,\psi})$;
	
	b) $t \inf \psi \leq \mathcal{E}_{f,\phi,\psi}(t) \leq t \sup \psi $ for all $t \in (0, t_{\phi,\psi})$;
	
	c) $t \sup \psi \leq \mathcal{E}_{f,\phi,\psi} \leq t \inf \psi$ for all $t \in (-t_{\phi,\psi},0)$.
\end{corollary}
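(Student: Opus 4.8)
The plan is to read everything off the formula $\mathcal{E}_{f,\phi,\psi}(t)=\lim_{n\to\infty}\tfrac1n\log\int e^{tS_n\psi}\,d\mu_{f,\phi}$ proved in Corollary~\ref{C.free} (the $\limsup$ there being an honest limit), using only two ingredients: the pointwise two-sided bound $n\inf\psi\le S_n\psi\le n\sup\psi$ on $M$, and the fact that $\mu_{f,\phi}$ is $f$-invariant, so that the normalization $\int\psi\,d\mu_{f,\phi}=0$ yields $\int S_n\psi\,d\mu_{f,\phi}=n\int\psi\,d\mu_{f,\phi}=0$ for every $n$; note also that this normalization forces $\inf\psi\le 0\le\sup\psi$.

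First I would dispatch items (b) and (c), which follow purely from the pointwise bound. For $t>0$ one has $e^{tn\inf\psi}\le\int e^{tS_n\psi}\,d\mu_{f,\phi}\le e^{tn\sup\psi}$ since $\mu_{f,\phi}$ is a probability measure; applying $\tfrac1n\log(\cdot)$ and letting $n\to\infty$ gives $t\inf\psi\le\mathcal{E}_{f,\phi,\psi}(t)\le t\sup\psi$, which is (b). For $t<0$ the two exponential inequalities reverse and the identical computation yields $t\sup\psi\le\mathcal{E}_{f,\phi,\psi}(t)\le t\inf\psi$, which is (c).

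Next, for item (a): at $t=0$ the integrand is identically $1$, so $\mathcal{E}_{f,\phi,\psi}(0)=0$ directly. For the nonnegativity on the whole interval I would invoke Jensen's inequality for the convex function $s\mapsto e^{ts}$ against the probability measure $\mu_{f,\phi}$, namely $\int e^{tS_n\psi}\,d\mu_{f,\phi}\ge\exp\!\big(t\textstyle\int S_n\psi\,d\mu_{f,\phi}\big)=e^{0}=1$; taking $\tfrac1n\log(\cdot)$ and passing to the limit gives $\mathcal{E}_{f,\phi,\psi}(t)\ge 0$. This is of course consistent with (b)--(c), since $t\inf\psi\le 0$ when $t>0$ and $t\sup\psi\le 0$ when $t<0$.

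There is no genuine obstacle here: the whole argument is monotonicity of $\log$, monotonicity of $s\mapsto e^{ts}$ in the appropriate direction, and one application of Jensen. The only point deserving a line of justification is that the $\limsup$ in the definition of $\mathcal{E}_{f,\phi,\psi}$ is actually a limit, which is guaranteed by Corollary~\ref{C.free}, so that all the displayed inequalities survive the passage $n\to\infty$.
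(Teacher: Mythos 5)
Your proof is correct, but it follows a genuinely different route from the paper's. The paper deduces (a)--(c) from the regularity established in Corollary~\ref{C.free}: it uses convexity of $t\mapsto\mathcal{E}_{f,\phi,\psi}(t)$ together with the derivative identities $\mathcal{E}'_{f,\phi,\psi}(0)=\int\psi\,d\mu_{f,\phi}=0$ and $\mathcal{E}'_{f,\phi,\psi}(t)=\int\psi\,d\mu_{f,\phi+t\psi}\in[\inf\psi,\sup\psi]$, so that (a) follows from the tangent-line inequality at $t=0$ and (b)--(c) from integrating the bound on $\mathcal{E}'$. You instead work directly with the defining expression $\frac1n\log\int e^{tS_n\psi}\,d\mu_{f,\phi}$: the pointwise bound $n\inf\psi\le S_n\psi\le n\sup\psi$ gives (b)--(c), and Jensen's inequality combined with $f$-invariance (so $\int S_n\psi\,d\mu_{f,\phi}=n\int\psi\,d\mu_{f,\phi}=0$) gives (a). Your argument is more elementary: it needs neither the differentiability of the free energy nor the formula for $\mathcal{E}'$ (nor, strictly speaking, that the $\limsup$ is a limit, since both one-sided bounds pass to the $\limsup$ anyway, so even the appeal to Corollary~\ref{C.free} is dispensable). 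What the paper's route buys in exchange is the quantitative control of the derivative itself, $\inf\psi\le\mathcal{E}'_{f,\phi,\psi}(t)\le\sup\psi$, which is the ingredient reused immediately afterwards in the Legendre-transform and large-deviations discussion, whereas your argument yields only the bounds on $\mathcal{E}$ itself.
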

These properties follows immediately from the convexity, using $\mathcal{E}'_{f,\phi,\psi}(0) = \int \psi d\mu_{f,\phi}=0$ and more generally
$\mathcal{E}'_{f,\phi,\psi}(t) = \int \psi d\mu_{f,\phi+t\psi}$ (what implies $\inf \psi \leq \mathcal{E}_{f,\phi,\psi}'(t) \leq \sup \psi$).

\subsection*{Differentiability of the large deviations rate functions}

When $\psi$ is not cohomologous to a constant and $\int \psi d\mu_{f,\phi}=0$, by convexity it follows that it is well defined the ``local" Legendre transform given by:

$$I_{f,\phi,\psi}(s) = \sup_{-t_{\phi,\psi} \leq t \leq t_{\phi,\psi} } \big( st - \mathcal{E}_{f,\phi,\psi}(t) \big)$$

Noticing that $\mathcal{E}_{f,\phi,\psi+c}(t) = \mathcal{E}_{f,\phi,\psi}(t) + ct$, we get that
$$I_{f,\phi,\psi+c}(t) = I_{f,\phi,\psi}(t-c).$$

When the free energy function is differentiable it follows the variational property
\begin{equation}
I_{f,\phi,\psi}( \mathcal{E}'_{f,\psi}(t) ) = t \mathcal{E}'_{f,\psi}(t) - \mathcal{E}_{f,\phi,\psi}(t)
\end{equation}
and the domain of $I_{f,\phi,\psi}$ contains the interval $[\mathcal{E}'_{f,\psi}(-t_{\phi,\psi}), \mathcal{E}'_{f,\psi}(t_{\phi,\psi}) ]$.
Moreover, $I_{f,\phi,\psi}(s)=0$ if and only if $s=m_{f,\phi}$ belongs to the domain of $I_{f,\phi,\psi}$.
Also, the convexity of $\mathcal{E}_{f,\phi,\psi}$ together with the differentiability of $\mathcal{E}_{f,\phi,\psi}$ gives that $t\mapsto I_{f,\phi,\psi}(t)$ is strictly convex and differentiable.

Using the differentiability of the free energy function, it follows the following result of large deviations by Gartner-Ellis’s theorem, with rate function given by the Legendre transform.

\begin{theorem*}
Consider $(f,\phi,\psi) \in 
\cH^{r+\alpha}_{\gamma, L, N, G} \times P^r(\vep,\vep')\times C^r(M,\R)$, $r \geq 2$, such that
 $\psi \in C^r$ is not cohomologous to a constant, then for any interval $[a,b] \subset [\mathcal{E}'_{f,\psi}(-t_{\phi,\psi}),   \mathcal{E}'_{f,\psi}(t_{\phi,\psi}) ] $ it holds that:
$$\underset{n \to \infty}{\limsup} \dfrac{1}{n} \log \mu_{f_v, \phi_v }\Big( x \in M, \dfrac{1}{n} S_n \psi_v (x) \in [a,b]  \Big)  \leq - \inf_{s \in [a,b]} I_{f_v, \phi_v, \psi_v}(s)$$
and  
$$\underset{n \to \infty}{\liminf} \dfrac{1}{n} \log \mu_{f_v, \phi_v }\Big( x \in M, \dfrac{1}{n} S_n \psi_v (x) \in (a,b)  \Big)  \geq - \inf_{s \in [a,b]} I_{f_v, \phi_v, \psi_v}(s).$$
\end{theorem*}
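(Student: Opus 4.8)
The statement is a \emph{local} form of the Gärtner--Ellis theorem, so the plan is to run the classical two-sided argument — exponential Chebyshev for the upper bound, exponential tilting (change of measure) for the lower bound — while keeping every optimization in the Legendre duality inside $[-t_{\phi,\psi},t_{\phi,\psi}]$, the interval on which the free energy is actually controlled. The one ingredient needed beyond Corollary~\ref{C.free} is that the convergence there is a genuine limit: from the identity $\int e^{tS_n\psi}\,d\mu_{f,\phi}=(\lambda_{f,\phi+t\psi}/\lambda_{f,\phi})^n\int\lambda_{f,\phi+t\psi}^{-n}\mathcal{L}^n_{f,\phi+t\psi}(h_{f,\phi})\,d\nu_{f,\phi}$ together with the uniform convergence of $\lambda_{f,\phi+t\psi}^{-n}\mathcal{L}^n_{f,\phi+t\psi}(h_{f,\phi})$ to a strictly positive function (proof of Corollary~\ref{C.free}), one obtains
$$\lim_{n\to\infty}\tfrac1n\log\int e^{tS_n\psi}\,d\mu_{f,\phi}=\mathcal{E}_{f,\phi,\psi}(t),\qquad |t|\le t_{\phi,\psi},$$
where by Corollary~\ref{C.free} the map $\mathcal{E}_{f,\phi,\psi}$ is analytic and — since $\psi$ is not cohomologous to a constant — strictly convex on this interval, with $\mathcal{E}'_{f,\phi,\psi}(t)=\int\psi\,d\mu_{f,\phi+t\psi}$; in particular $\mathcal{E}'_{f,\phi,\psi}$ is an increasing homeomorphism of $[-t_{\phi,\psi},t_{\phi,\psi}]$ onto $[\mathcal{E}'_{f,\phi,\psi}(-t_{\phi,\psi}),\mathcal{E}'_{f,\phi,\psi}(t_{\phi,\psi})]$ and $m_{f,\phi}=\mathcal{E}'_{f,\phi,\psi}(0)=\int\psi\,d\mu_{f,\phi}$ lies in this range.

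\textbf{Upper bound.} For $t\in[0,t_{\phi,\psi}]$ and $s\in\R$, the Markov inequality gives $\mu_{f,\phi}(\tfrac1nS_n\psi\ge s)\le e^{-nts}\int e^{tS_n\psi}\,d\mu_{f,\phi}$; taking $\tfrac1n\log$, then $\limsup_n$, then the infimum over $t\in[0,t_{\phi,\psi}]$ and using the limit above yields $\limsup_n\tfrac1n\log\mu_{f,\phi}(\tfrac1nS_n\psi\ge s)\le-\sup_{0\le t\le t_{\phi,\psi}}(ts-\mathcal{E}_{f,\phi,\psi}(t))$. For $m_{f,\phi}\le s\le\mathcal{E}'_{f,\phi,\psi}(t_{\phi,\psi})$ the concave map $t\mapsto ts-\mathcal{E}_{f,\phi,\psi}(t)$ is nondecreasing at $t=0$, hence nondecreasing on $[-t_{\phi,\psi},0]$, so its supremum over $[0,t_{\phi,\psi}]$ equals the one over $[-t_{\phi,\psi},t_{\phi,\psi}]$, i.e. $I_{f,\phi,\psi}(s)$; the mirror estimate with $t\le0$ covers $s\le m_{f,\phi}$. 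Given $[a,b]$: if $m_{f,\phi}\in[a,b]$ then $\inf_{[a,b]}I_{f,\phi,\psi}=I_{f,\phi,\psi}(m_{f,\phi})=0$ and the bound is trivial; if $m_{f,\phi}<a$ then $\mu_{f,\phi}(\tfrac1nS_n\psi\in[a,b])\le\mu_{f,\phi}(\tfrac1nS_n\psi\ge a)$ and, $I_{f,\phi,\psi}$ being strictly convex with minimum $0$ at $m_{f,\phi}$, $\inf_{[a,b]}I_{f,\phi,\psi}=I_{f,\phi,\psi}(a)$; the case $m_{f,\phi}>b$ is symmetric.

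\textbf{Lower bound.} Fix $s_0\in(a,b)$; since $[a,b]\subset[\mathcal{E}'_{f,\phi,\psi}(-t_{\phi,\psi}),\mathcal{E}'_{f,\phi,\psi}(t_{\phi,\psi})]$ there is a unique $t_0\in(-t_{\phi,\psi},t_{\phi,\psi})$ with $\mathcal{E}'_{f,\phi,\psi}(t_0)=s_0$, and the variational identity recorded before the statement gives $I_{f,\phi,\psi}(s_0)=t_0s_0-\mathcal{E}_{f,\phi,\psi}(t_0)$. Introduce the tilted probabilities $d\tilde\mu_n=(\int e^{t_0S_n\psi}\,d\mu_{f,\phi})^{-1}e^{t_0S_n\psi}\,d\mu_{f,\phi}$, choose $\delta\in(0,\min\{s_0-a,b-s_0\})$ and set $F_n=\{|\tfrac1nS_n\psi-s_0|<\delta\}\subset\{\tfrac1nS_n\psi\in(a,b)\}$. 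On $F_n$ one has $e^{-t_0S_n\psi}\ge e^{-n(t_0s_0+|t_0|\delta)}$, whence
$$\mu_{f,\phi}\big(\tfrac1nS_n\psi\in(a,b)\big)\ge\Big(\int e^{t_0S_n\psi}\,d\mu_{f,\phi}\Big)e^{-n(t_0s_0+|t_0|\delta)}\,\tilde\mu_n(F_n).$$
It remains to show $\tilde\mu_n(F_n)\to1$: for $|\theta|$ small enough that $t_0+\theta\in(-t_{\phi,\psi},t_{\phi,\psi})$ one has $\tfrac1n\log\int e^{\theta S_n\psi}\,d\tilde\mu_n\to\mathcal{E}_{f,\phi,\psi}(t_0+\theta)-\mathcal{E}_{f,\phi,\psi}(t_0)$, a strictly convex function of $\theta$ with derivative $s_0$ at $0$, so a Chebyshev bound (used for small $\theta>0$ and small $\theta<0$) forces $\tilde\mu_n(\tfrac1nS_n\psi\ge s_0+\delta)$ and $\tilde\mu_n(\tfrac1nS_n\psi\le s_0-\delta)$ to decay exponentially. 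Taking $\tfrac1n\log$ and $\liminf_n$ in the displayed inequality, and using $\lim_n\tfrac1n\log\int e^{t_0S_n\psi}\,d\mu_{f,\phi}=\mathcal{E}_{f,\phi,\psi}(t_0)$ together with $\tilde\mu_n(F_n)\to1$, gives $\liminf_n\tfrac1n\log\mu_{f,\phi}(\tfrac1nS_n\psi\in(a,b))\ge-I_{f,\phi,\psi}(s_0)-|t_0|\delta$; letting $\delta\downarrow0$ and then taking the supremum over $s_0\in(a,b)$, together with continuity of $I_{f,\phi,\psi}$ on $[a,b]$ (so $\inf_{(a,b)}I_{f,\phi,\psi}=\inf_{[a,b]}I_{f,\phi,\psi}$), yields the claimed lower bound.

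\textbf{Main obstacle.} The delicate point is precisely the \emph{locality}: unlike in the classical Gärtner--Ellis theorem, $\mathcal{E}_{f,\phi,\psi}$ is only defined and convex on the bounded interval $[-t_{\phi,\psi},t_{\phi,\psi}]$, because $\phi+t\psi$ leaves the admissible class for $|t|$ large. The hypothesis $[a,b]\subset[\mathcal{E}'_{f,\phi,\psi}(-t_{\phi,\psi}),\mathcal{E}'_{f,\phi,\psi}(t_{\phi,\psi})]$ is exactly what keeps the relevant Legendre maximizers $t_0$ strictly inside this interval, so that (i) the Legendre transform $I_{f,\phi,\psi}$ genuinely represents the rate on $[a,b]$, and (ii) in the tilting step the log-moment generating function $\theta\mapsto\mathcal{E}_{f,\phi,\psi}(t_0+\theta)-\mathcal{E}_{f,\phi,\psi}(t_0)$ stays under control on a full neighborhood of $\theta=0$, which is all that is needed. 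Keeping this bookkeeping straight is where the care goes; the remaining steps are the standard ones, essentially as in \cite[Proposition 5.2]{BCV16}.
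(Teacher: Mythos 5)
Your proof is correct. The paper itself gives no written argument for this statement: it simply asserts that the result follows ``by G\"artner--Ellis's theorem'' from the differentiability of the free energy in Corollary~\ref{C.free}, with rate given by the local Legendre transform. What you do differently is to run the G\"artner--Ellis machinery by hand --- exponential Chebyshev for the upper bound, exponential tilting plus a second Chebyshev estimate showing the tilted measures concentrate for the lower bound --- while keeping every optimization inside $[-t_{\phi,\psi},t_{\phi,\psi}]$. This is genuinely useful: the textbook G\"artner--Ellis theorem assumes the limiting scaled log-moment generating function exists (possibly $=+\infty$) for all real parameters and is essentially smooth/steep, and its rate function is the \emph{global} Legendre transform, so the paper's one-line citation silently leaves to the reader exactly the localization bookkeeping you carry out: that the convergence defining $\mathcal{E}_{f,\phi,\psi}$ is a genuine limit on $[-t_{\phi,\psi},t_{\phi,\psi}]$, that the hypothesis $[a,b]\subset[\mathcal{E}'_{f,\phi,\psi}(-t_{\phi,\psi}),\mathcal{E}'_{f,\phi,\psi}(t_{\phi,\psi})]$ forces the Legendre maximizer $t_0$ for $s_0\in(a,b)$ to lie strictly inside the interval (so the tilted free energy $\theta\mapsto\mathcal{E}_{f,\phi,\psi}(t_0+\theta)-\mathcal{E}_{f,\phi,\psi}(t_0)$ is controlled on a full neighborhood of $0$), and that the local transform $I_{f,\phi,\psi}$ is the correct rate on $[a,b]$. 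Your route buys a self-contained proof of the local statement; the paper's buys brevity at the cost of leaving these details unchecked. Two minor remarks: in the upper bound, strict convexity of $I_{f,\phi,\psi}$ is not needed (plain convexity with minimum $0$ at $m_{f,\phi}=\mathcal{E}'_{f,\phi,\psi}(0)$ already gives $\inf_{[a,b]}I_{f,\phi,\psi}=I_{f,\phi,\psi}(a)$ when $a>m_{f,\phi}$), and the continuity of $I_{f,\phi,\psi}$ you invoke at the very end is immediate because the local Legendre transform is a supremum of affine functions of $s$ over the compact set $|t|\le t_{\phi,\psi}$, hence Lipschitz with constant $t_{\phi,\psi}$.
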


In the following we prove the regularity of the large deviations rate function with respect to the dynamics.

\begin{proof}[Proof of Corollary \ref{cor.I}]
	Let $\psi_{v*}$ be an observable which is not cohomologous to a constant, then there exists a neighborhood $U$ of $v_*$ such that every $\psi_{v*}$ is not cohomologous to a constant.
	
	For each $v\in U$, put $J_v=[\mathcal{E}'_{f_v,\psi_v}(-t_{\phi_v,\psi_v}), \mathcal{E}'_{f_v,\psi_v}(t_{\phi_v,\psi_v}) ]$ and consider $J$ an interval  contained in $\bigcap_{v\in U}J_v$.  
	
	Using the variational property of the Legendre transform and that
	$\mathcal{E}''_{f_v,\phi_v,\psi_v}(t) > 0$, we have that for all
	$ s \in  J$   
	there exists a unique $t=t(s,v)$ such that $s= \mathcal{E}'_{f_v,\phi_v,\psi_v}(t)$ and 
	\begin{equation}\label{eq}
	I_{f_v,\phi_v,\psi_v}(s) = st(s,v) - \mathcal{E}_{f_v,\phi_v,\psi_v}(t(s,v))
	\end{equation}
	
	By taking a smaller neighborhood $U$ of $v_*$ if necessary, there exists an open interval $J$ such that \eqref{eq} holds for all $s \in \overline{J}$ and $v \in U$. Now, consider
	\begin{align*}
	F: V \times \overline{J} &\to V \times \R  \\
	(v,t) &\mapsto (v, \mathcal{E}'_{f_v,\phi_v,\psi_v}(t) ).
	\end{align*}

	$F$ is injective because it is strictly decreasing along the fibers. As $V \times \overline{J}$ is a compact metric space, then $F$ is a homeomorphism onto its image. Actually, it is a $C^{r-1}$ diffeomorphism. Applying the Implicit Function Theorem to the equation
	\begin{equation}\label{implicit}
	F(v,t(v,s)) = (v,s)  \quad \quad \text{ where } \quad s= \mathcal{E}'_{f_v,\phi_v,\psi_v}(t),
	\end{equation}
	we have that for all  $(v,s) \in F(V \times \overline{J})$ there exists a unique $t=t(s,v)$ such that \eqref{implicit} is valid and $(v,s) \mapsto t(v,s)$ is $C^{r-1}$.
	
	From \eqref{eq}, it follows that $(v,s) \mapsto I_{f_v,\phi_v,\psi_v}(s)$ is $C^{r-1}$.
	
\end{proof}

\section{Examples}

In this Section we describe examples of dynamical systems that satisfy the assumption of Theorems A and B.

\subsection{Bad set covered with few domains of injectivity  (as in \cite{BCV16})}

The robust class  of maps in \cite{CV2013, BCV16} satisfy the condition (C), we describe them here.

Let $f:M \to M$ be a local homeomorphism and assume that there exists a function $x \mapsto L(x)$ such that for every $x \in M$ there exist a neighborhoud $U_x$ of $x$ so that the restriction $f_x: U_x \to f(U_x)$ is invertible and 
$$d(f_x^{-1}(y), f_x^{-1}(z) ) \leq L(x) d(y,z), \quad \text{ for all } y,z \in f(U_x).$$

Assume that there are constants $\sigma>1$ and $L\geq 1$  and an open region $A \subset M$ such that:

(H1) $L(x) \leq L$  for every $x \in A$ and $L(x) < \sigma^{-1}$ for all $ x \notin A$ and $L \in [1, L_0]$ for some $L_0$ close to $1$ (the precise condition is given in [BCV]).

(H2) There exists a finite covering $\mathcal{U}$ of $f$ by open domains of injectivity for $f$ such that $A$ can be covered by $q<\operatorname{deg}(f)$ elements of $\mathcal{U}$.

It is clear that every map $f$ satisfying (H1) and (H2) also satisfy (C),  since for every point $x\in M$ each pre-image lies at most in one set of $\mathcal{U}$, so at least one pre-image $y$ is not in  $A$, so the inverse branch that sends $x$ to $y$ is contractive.

So, if the potential has small variation and $L$ is close enough to $1$, then Theorems A and B are valid for this map.

\subsection{Derived from expanding}

Consider a $C^r$-expanding map $A: M \to M$ of degree $G$, $\mathcal{U}$ some covering of $M$ by domains of injectivity, $p\in M$ a fixed point of $A$ and $U \in \mathcal{U}$ one set containing $p$. Deform $A$ on a small neighbourhood $V \subset U$ of $p$ through a bifurcation in order to change de index of $p$. This can be done in order that either $p$ becomes a saddle or a sink.

Consider $f$ a map after the bifurcation, then $f$ coincides with $A$ outside $V$, so its action is  expanding outside of $V$. This implies that every point has a contracting inverse branch.
We consider $f$ so that $f$ is not too contracting in $V$, this guarantees that $f\in \mathcal{H}^r(\gamma,1,L,G)$ for some $L$ arbitrarily close to $1$.

For this map Theorem B is valid, so if $\phi$ is a potential $C^r$-close to the null potential, then $\mu_{f,\phi}$ is the unique equilibrium state for $f$ and depends smoothly with respect to $(f,\phi)$.

\begin{remark}
	It is particulary interesting  this construction when $p$ becomes a sink after the bifurcation. If $\{f_t\}$ denotes the parameterized family, supposing that it is uniformly expanding for $t<t_0$ and that $p$ is a sink for $t>t_0$, it happens that the equilibrium state will be supported in the whole torus for $t<t_0$ and in a proper subset for $t>t_0$, and the density of the equilibrium state will still vary smoothly in a neighborhood of the  parameter $t_0$.
\end{remark}

\subsection{Manneville-Pomeau like maps}
 Take the $C^{\infty}$ bump function 
 $$
 B(x)=\left\{\begin{array}{llr}
 	\dfrac{\exp(-x^{-1})}{\exp(-x^{-1})+\exp(-(1-x)^{-1})}, & \text{for } 0<x<1;\\
 	0, & \text{for } x\leq 0;\\
 	1, & \text{for } x\geq 1.
 \end{array}\right.
 $$
 
 \begin{figure}[h]
 	\centering
 	\includegraphics[width=0.7\linewidth]{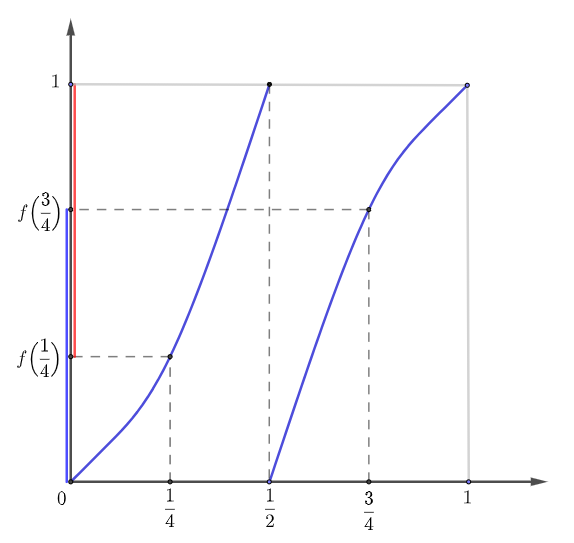}
 	\caption{Graph of Maneville-Pomeau like map $f$.}
 	\label{fig:pomeaulike}
 \end{figure}
  and consider $f: S^1 \to S^1$, where we identify $S^1$ with $\R/\Z$, that satisfies
\[
f(x) = \left\{\begin{array}{lr}
	x + B(x), & \text{for } 0\leq x\leq 1/2\\
	(x-1) + B(x), & \text{for } 1/2< x < 1
\end{array}\right. 
\]

Then $f$ is a $C^{\infty}$ endomorphism that satisfies $f(0)=0$, $f'(0)=1$ and $f'(x)>1$ if $x\neq 0$. Moreover it is easy to see that $f'$ is increasing in $[0,1/2]$ and decreasing in $[1/2,1]$ and therefore $f'(x)\geq f'(1/4)=f'(3/4)>1$ for every $x\in [1/4,3/4]$. Also $f([1/4,3/4])=S^1$ we conclude that $f$ satisfies the condition (C) in any topology $C^{r+\alpha}$, see Figure \ref{fig:pomeaulike} the graph of $f$.


Consider the family of potentials $\varphi_{t} = -t \log |Df|$. It is clear that $\phi\in P^{r+\alpha}(\vep,\vep')$  for $|t|$ small enough. So Theorem B is valid for these maps.
For $t=0$, the corresponding equilibrium state is the maximum entropy measure.

One interesting application is if we perturb $f$ in order to create a sink at $x=0$, so the equilibrium state of the perturbed dynamical system will be supported in a Cantor set not containing $0$. Since Theorem B is valid for a robust class, the equilibrium state supported in the Cantor set will be $C^{r-1}$-close to the measure of maximum entropy of the original system.

\subsection{Hyperbolic potentials for topologically exact local homeomorphism}

The dynamical systems considered in this paper includes the local homeomorphisms with hyperbolic potentials considered in \cite{ARS2021}, let us describe them.

Assume that $f:M \to M$ is a local homeomorphism, $\phi$ a $\sigma$-hyperbolic potential Holder continuous.
Consider $\{U_1, \cdots, U_m\}$ a finite open cover by domains of injectivity and $\nu = \max_j\{ \sup_{x \in U_j} \|Df^{-1}(x)\} $.
Assuming that $f$ is topologically exact (i.e., for every open set $U$ there exists $N$ such that $f^N(U)=M$), that $\phi$ has small variation and
$$\nu \cdot \sigma <1,$$
then \cite{ARS2021} proves that there exists a unique equilibrium state.

Actually, in the proof of Lemma 3.2 in \cite{ARS2021} they verify that there exists some $N$ so that for every $x \in M$ there exists an inverse branch of $f^N$ that contracts at some rate $\gamma$, which means that condition (C) is valid.

We remark that the present work extends the result of \cite{ARS2021} without assuming that $f$ is topologically exact.

\subsection{Every point with some contracting inverse branch}

Although condition (C) asks that the integer $N$ is the same for every $x\in M$, we have that the same condition is valid if we ask that for every $x\in M$ there exist some pointwise $n=n(x)$ and some inverse branch $h_n$ of $T^n$ so that $h_n$ is a contraction.

\begin{proposition}
	Suppose that $f:M \to M$ is a local homeomorphism and that for every $x\in M$ there exist some $n=n(x)$ and some inverse branch $h_n$ of $f^n$ so that $h_n$ is a contraction. Then it is valid condition (C), that is, there exists $\tilde N$ so that for every $x \in M$ there exists some inverse branch $h$ of $f^{\tilde N}$ so that $h$ is a contraction.
\end{proposition}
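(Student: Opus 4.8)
The plan is to use a compactness argument together with the open-ness of the ``contracting inverse branch'' condition. First I would observe that if $h_n$ is an inverse branch of $f^n$ defined near a point $x$ and $h_n$ is a contraction (say with constant $\lambda<1$ on some ball $B(x,\rho)$), then by continuity of the relevant derivatives/Lipschitz constants there is a neighborhood $W_x$ of $x$ on which the ``same'' inverse branch of $f^n$ (obtained by continuation) is still defined and still a contraction, with a uniform constant $\lambda_x<1$ and uniform domain radius. Thus to each $x\in M$ we associate a pair $(n(x), W_x)$, and $\{W_x\}_{x\in M}$ is an open cover of the compact space $M$; extract a finite subcover $W_{x_1},\dots,W_{x_k}$, and set $n_i=n(x_i)$.

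Next I would homogenize the exponents. Let $\tilde N$ be a common multiple of $n_1,\dots,n_k$, say $\tilde N=\mathrm{lcm}(n_1,\dots,n_k)$, or more simply $\tilde N=n_1\cdots n_k$. For a given $x\in M$, pick $i$ with $x\in W_{x_i}$; then there is a contracting inverse branch $h$ of $f^{n_i}$ defined near $x$. Writing $\tilde N=n_i\cdot q_i$, I would compose $h$ with inverse branches of $f^{n_i}$ along the backward orbit: starting from $x$, apply $h$ to reach $x_1'=h(x)$, then choose any inverse branch of $f^{n_i}$ near $x_1'$, and so on, $q_i$ times in total. This yields an inverse branch of $f^{\tilde N}=(f^{n_i})^{q_i}$ near $x$. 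However, only the first factor is guaranteed to be a contraction; the subsequent factors could expand.

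To fix this I would instead choose $\tilde N$ large enough that the product of the ``worst-case'' Lipschitz constant $L_{f}$ (or the relevant global bound from the standing hypotheses, e.g. $L_N$ in Condition (C)'s setup) raised to the power of the number of non-contracting factors is dominated by the contraction gained from iterating the good branch. Concretely: let $\lambda:=\max_i\lambda_{x_i}<1$ be the uniform contraction rate over the finite subcover, and let $L:=\max(L_f,1)$ be a global Lipschitz bound for all inverse branches of $f$. Fix $M_0$ large with $\lambda^{M_0}L^{n_{\max}}<1$ where $n_{\max}=\max_i n_i$, and set $\tilde N=M_0\cdot n_1\cdots n_k$. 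Given $x$, find $x\in W_{x_i}$; since the good contracting branch of $f^{n_i}$ can be iterated — each time landing in $M$, and at points landing again in some $W_{x_j}$ we get a fresh contraction, but more robustly one just uses that iterating $h$ itself $M_0(n_1\cdots n_k/n_i)$ times is a contraction by rate $\lambda^{M_0 (n_1\cdots n_k/n_i)}$, and then fill out to exponent $\tilde N$ with at most $n_{\max}$ inverse branches of $f$ contributing a factor at most $L^{n_{\max}}$ — I obtain an inverse branch of $f^{\tilde N}$ near $x$ with Lipschitz constant at most $\lambda^{M_0}L^{n_{\max}}<1$, hence a contraction.

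The main obstacle is the bookkeeping in the non-homogeneous case: one must be careful that the iterated good branch stays well-defined (which follows because the domains of the branches can be taken of uniform radius $\delta_{\tilde N}$ shrinking controllably, as in the discussion after \eqref{L} in the paper) and that the finitely many ``filler'' inverse branches of $f$ can always be chosen (which is automatic since $f$ is a local homeomorphism and $f^{-1}(B(z,\delta_0))$ always splits into $\deg(f)$ branches). I would also double-check the edge case where $n(x)$ varies wildly: compactness is exactly what bounds it, giving $n_{\max}<\infty$, and the uniform contraction rate $\lambda<1$ over the finite subcover is what makes the power $\lambda^{M_0}$ meaningful. Once these uniformities are in hand, the computation $\lambda^{M_0}L^{n_{\max}}<1$ closes the argument and establishes Condition (C) with the single integer $\tilde N$ and rate $\gamma:=\lambda^{M_0}L^{n_{\max}}$.
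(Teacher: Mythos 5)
Your overall strategy (compactness to get a finite family of contracting branches with a uniform rate, then compose contractions and pay for a bounded ``filler'' block using an inequality of the form $\lambda^{M_0}L^{n_{\max}}<1$) is the same as the paper's, but the step you rely on to produce the long composition is flawed. You claim that ``iterating $h$ itself'' $M_0(n_1\cdots n_k/n_i)$ times gives a contracting inverse branch of a high iterate near $x$. An inverse branch $h$ of $f^{n_i}$ is only defined on a neighborhood of $x$ (your $W_{x_i}$, or a ball around $x$); the point $h(x)$ is some preimage of $x$ under $f^{n_i}$ and in general lies nowhere near that domain, so $h\circ h$ need not even be defined. There is no invariance $h(W_{x_i})\subset W_{x_i}$, and indeed if one could always iterate the initial branch the proposition would be immediate; the whole difficulty is that the backward orbit leaves the domain of the first branch and one must switch to a different member of the finite family at each landing point, whose exponent $n_j$ may differ from $n_i$. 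Your lcm homogenization $\tilde N=M_0\,n_1\cdots n_k$ does not repair this, because the branches you are forced to use along the chain are branches of $f^{n_{j_1}},f^{n_{j_2}},\dots$ with varying exponents, so their accumulated exponent will not match $\tilde N$ by design and cannot be controlled by choosing $\tilde N$ as a common multiple.

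The missing ingredients are exactly the ones the paper supplies. First, take $\delta$ a Lebesgue number of the finite cover, so that at every landing point $y$ the whole ball $B(y,\delta)$ lies in some $W_{x_j}$ and the corresponding contracting branch $g$ of $f^{n_j}$ is defined on all of it; this is what keeps the composition well defined on a ball of definite radius around $x$ (your appeal to the uniform $\delta_0$ of \eqref{L} does not by itself give domains for the \emph{good} branches at the landing points). Second, replace the lcm bookkeeping by a stopping rule: with $N=\max_j n_j$ and $\kappa$ chosen so that $\gamma^{\kappa}L^{N}<1$, chain branches $g_1,\dots,g_s$ (exponents $n_{j_1},\dots,n_{j_s}$) until $(\kappa-1)N<n_{j_1}+\cdots+n_{j_s}\le \kappa N$; then $s\ge\kappa$, the remainder $m=\kappa N-\sum_i n_{j_i}$ is $<N$, and composing with an arbitrary inverse branch of $f^m$ yields a branch of $f^{\tilde N}$, $\tilde N=\kappa N$, with Lipschitz constant at most $\gamma^{\kappa}L^{N}<1$. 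Your passing remark that ``at points landing again in some $W_{x_j}$ we get a fresh contraction'' is the correct mechanism, but as written you discard it in favor of the invalid self-iteration of $h$, so the proof as proposed does not go through without this repair.
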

\begin{proof} 
		By continuity, every $x$ admits a neighborhood $U_x$ such that $h_{n(x)}$ contracts at rate $\gamma_x<1$ in $U_x$.
	By compactness we can cover $M$ by finite neighborhoods $U_j:=U_{x_j}$, $j=1,\cdots, r$, in which $h_{n_j}$ is an inverse branch that contracts at rate $\gamma := \max \gamma_{x_j}$.
	
	Take $N=\max_j n_j$ and $\kappa$ an integer such that $\gamma^\kappa L^N<1$, where $L=\sup L_f(x)$. Also take $\delta$ the Lebesgue number of the cover above. So given $x\in M$, the ball centered in $x$ and radius $\delta$ is contained in some $V_1:=U_{j_1}$ and then, $g_1$ as the restriction of $h_{j_1}$ to the this ball, it is a $\gamma$-contracted inverse branch of $f^{n_{j_1}}$. Define inductively $y_1=x$, $y_2=g_1(y_1)$, $\cdots$, $y_{s+1}=g_{s}(y_{s})$ such that the ball centered in $y_i$ and radius $\delta$ is contained in some $V_i:=U_{j_i}$, where $g_i$ is the restriction of $h_{j_i}$ to the ball $B(y_i,\delta)$. $g_i$ is therefore a $\gamma$-contracted inverse branch of $f^{n_{j_i}}$. Moreover, we choice $s\geq \kappa$ such that $(\kappa-1)N<n_{j_1}+\cdots n_{j_s}\leq \kappa N$, let $m=\kappa N-(n_{j_1}+\cdots n_{j_s})$, then for any inverse branch $g$ of $f^m$ around $x_s$ we have that $h=g\circ g_s\circ g_{s-1} \circ \cdots \circ g_1$ is an inverse branch of $f^{\tilde N}$ (with $\tilde N=\kappa N$) that contracts at least rate $\gamma^{\kappa} L^N$.
\end{proof}

\end{document}